\newtheorem{theorem}{Theorem}
\newtheorem{lemma}{Lemma}
\newtheorem{fact}{Fact}
\theoremstyle{definition}
\newtheorem{definition}{Definition}
\newtheorem{claim}{Claim}
\newtheorem{problem}{Problem}
\newtheorem{case}{Case}
\begin{document}
\title
{\bf\Large Cyclability of {\em id}-cycles in graphs\thanks{Supported by NSFC (No.~11271300).}}

\date{}
\author{\small Ruonan Li${}^1$, Bo Ning${}^2$\thanks{E-mail address: bo.ning@tju.edu.cn (B. Ning)}, Shenggui Zhang${}^1$\thanks{Corresponding author. E-mail address: sgzhang@nwpu.edu.cn (S. Zhang)}
 \\[2mm]
\small ${}^1$Department of Applied Mathematics, Northwestern Polytechnical University,\\
\small Xi'an, Shaanxi 710072, P.R.~China\\
\small ${}^2$Center for Applied Mathematics, Tianjin University,\\
\small Tianjin 300072, P.R.~China}
\maketitle

\begin{abstract}
 Let $G$ be a graph on $n$ vertices and $C'=v_0v_1\cdots v_{p-1}v_0$ a vertex sequence of $G$ with $p\geq 3$ ($v_i\neq v_j$ for all $i,j=0,1,\ldots,p-1$, $i\neq j$). If for any successive vertices $v_i$, $v_{i+1}$ on $C'$, either $v_iv_{i+1}\in E(G)$ or both of the first implicit-degrees of $v_i$ and $v_{i+1}$ are at least $n/2$ (indices are taken modulo $p$), then $C'$ is called an $id$-cycle of $G$. In this paper, we prove that for every $id$-cycle $C'$, there exists a cycle $C$ in $G$ with $V(C')\subseteq V(C)$. This generalizes several early results on the Hamiltonicity and {cyclability} of graphs.
\medskip

\noindent {\bf Keywords:} ~degree, implicit-degree, Hamiltonicity, cyclability\\
\noindent {\bf Mathematics Subject Classification:} 05C38, 05C45
\smallskip

\end{abstract}

\section{Introduction}

All graphs considered in this paper are finite, simple and undirected. For terminology and notation not defined here we refer the reader to \cite{Bondy:76}.

Let $G$ be a graph. The vertex set and {edge} set of $G$ are denoted by $V(G)$ and $E(G)$, respectively. For a vertex $v$ and a subgraph $H$ of $G$, the {\em neighborhood} of $v$ in $H$ is defined as $N_H(v)=\{u: u\in V(H), uv\in E(G)\}$ and the {\em degree} of $v$ in $H$ is defined as $d_H(v)=|N_H(v)|$. If there is no ambiguity, we write $N(v)$ for $N_G(v)$ and $d(v)$ for $d_G(v)$.

In the study of the existence of Hamilton cycles in graphs, degree conditions play very important roles. Among the many results of this direction, the following {two} are well known.

\begin{theorem}[Dirac \cite{Dirac:52}]
\label{Dirac}
Let $G$ be a graph on $n\geq 3$ vertices. If $d(v)\geq n/2$ for every vertex $v\in V(G)$, then $G$ is Hamiltonian.
\end{theorem}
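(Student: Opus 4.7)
The plan is to use the classical longest-path argument. First I would observe that the hypothesis forces $G$ to be connected: if $G$ had two components, each would require at least $n/2+1$ vertices by the minimum degree condition, so together they would account for more than $n$ vertices, a contradiction. Next, I would choose a longest path $P=v_1v_2\cdots v_k$ in $G$. By maximality, every neighbor of $v_1$ and of $v_k$ lies on $P$, i.e.\ $N(v_1)\cup N(v_k)\subseteq V(P)$.

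The key combinatorial step is to promote $P$ to a cycle through $V(P)$. I would define
$$
S=\{\,i:1\leq i\leq k-1,\ v_iv_k\in E(G)\,\},\qquad T=\{\,i:1\leq i\leq k-1,\ v_{i+1}v_1\in E(G)\,\}.
$$
Then $|S|=d(v_k)\geq n/2$ and $|T|=d(v_1)\geq n/2$, while $|S\cup T|\leq k-1\leq n-1$. Inclusion--exclusion gives $|S\cap T|\geq n-(n-1)=1$, so there is an index $i$ with $v_iv_k\in E(G)$ and $v_1v_{i+1}\in E(G)$. The sequence $C=v_1v_2\cdots v_iv_kv_{k-1}\cdots v_{i+1}v_1$ is then a cycle with $V(C)=V(P)$.

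Finally, I would rule out $V(C)\subsetneq V(G)$. If some $u\notin V(C)$ existed, connectivity of $G$ would yield an edge from a vertex $u'\notin V(C)$ to some $v_j\in V(C)$; detaching $C$ at $v_j$ and prepending $u'$ would produce a path on $k+1$ vertices, contradicting the choice of $P$. Hence $V(C)=V(G)$ and $C$ is a Hamilton cycle.

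I expect the main obstacle to be the pigeonhole step that extracts the crossing indices in $S\cap T$: this is where Dirac's threshold $n/2$ is used sharply, and it is precisely the kernel that will later be refined using implicit degrees and the $id$-cycle structure in the main theorem of the paper. The remaining arguments are straightforward bookkeeping on paths and cycles.
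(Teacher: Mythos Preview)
Your argument is the standard longest-path proof of Dirac's theorem and is correct as written; the pigeonhole step on $S$ and $T$ is clean, and the final connectivity argument correctly upgrades the cycle on $V(P)$ to a Hamilton cycle.

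The paper, however, does not give a self-contained proof of Theorem~\ref{Dirac}; it is quoted as a classical result and then recovered as a consequence of the paper's main theorem. Indeed, under Dirac's hypothesis every vertex satisfies $d_1(v)\ge d(v)\ge n/2$, so any cyclic ordering of all $n$ vertices is an $id$-cycle, and Theorem~\ref{Thm:icd} immediately yields a Hamilton cycle. Thus your route is the classical direct one, while the paper's (implicit) route is top-down through the $id$-cycle machinery. Your approach is shorter and self-contained; the paper's approach situates Dirac's theorem as the base case of the hierarchy Dirac~$\Rightarrow$ Ore~$\Rightarrow$ Shi~$\Rightarrow$ $o$-cycle~$\Rightarrow$ $id$-cycle, which is the organizing theme of the article. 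Your remark that the crossing-index step is ``the kernel that will later be refined'' is apt: Lemma~\ref{nan} is exactly this Ore-type rotation, and Lemma~\ref{Li} is its implicit-degree analogue.
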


\begin{theorem}[Ore \cite{O.Ore:60}]
\label{Ore}
Let $G$ be a graph on $n\geq 3$ vertices. If $d(u) + d(v)\geq n$ for every pair of nonadjacent vertices $u,v\in V(G)$, then $G$ is Hamiltonian.
\end{theorem}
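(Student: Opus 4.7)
The plan is to argue by contradiction using the classical edge-maximality device. I would suppose that Ore's hypothesis holds for some $G$ on $n\geq 3$ vertices while $G$ is not Hamiltonian; since adding an edge can only increase degrees, the degree-sum condition is preserved under edge addition, so I may replace $G$ by an edge-maximal non-Hamiltonian supergraph on the same vertex set. Then $G$ is not complete (as $K_n$ is Hamiltonian for $n\geq 3$), and for every nonadjacent pair $u,v$, the graph $G+uv$ is Hamiltonian. Any Hamilton cycle of $G+uv$ must use the new edge, so $G$ itself contains a Hamilton path from $u$ to $v$.

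Next I would fix any nonadjacent pair $v_1,v_n$ together with a Hamilton path $P=v_1v_2\cdots v_n$, and introduce the index sets $S=\{i:1\leq i\leq n-1,\ v_1v_{i+1}\in E(G)\}$ and $T=\{i:1\leq i\leq n-1,\ v_iv_n\in E(G)\}$. Both are subsets of $\{1,2,\dots,n-1\}$, with $|S|=d(v_1)$ and $|T|=d(v_n)$.

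The key step is the ``Ore swap'': if some index $i$ lies in $S\cap T$, then $v_1v_2\cdots v_iv_nv_{n-1}\cdots v_{i+1}v_1$ is a Hamilton cycle of $G$, contradicting the non-Hamiltonicity of $G$. Therefore $S\cap T=\emptyset$, and hence $d(v_1)+d(v_n)=|S|+|T|=|S\cup T|\leq n-1$, in direct contradiction with the hypothesis $d(v_1)+d(v_n)\geq n$.

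The only piece of real content is this rotation argument around a shared index; the reduction to the edge-maximal case and the final counting of $|S|+|T|$ are routine. I do not anticipate any real obstacle beyond exhibiting the swap cleanly, and the overall proof should be quite short.
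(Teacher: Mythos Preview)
Your argument is correct and is exactly the classical Bondy--Chv\'atal edge-maximality plus rotation proof of Ore's theorem; nothing is missing.

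The paper, however, does not give a standalone proof of Theorem~\ref{Ore}: it is quoted as a known result. The route the paper offers to Ore is indirect, as a corollary of its main theorem. Since $d_1(v)\geq d(v)$ for every vertex, Ore's hypothesis implies the implicit-degree hypothesis of Theorem~\ref{Zhu}; and Fact~\ref{rem:1} derives Theorem~\ref{Zhu} from Theorem~\ref{Thm:icd} by exhibiting a spanning $id$-cycle in any $2$-connected graph satisfying that hypothesis. (Ore's condition on a graph with $n\geq 3$ forces $2$-connectedness, so this chain does recover Ore.) Your proof is therefore not the paper's: you give the short self-contained Hamilton-path/index-swap argument, while the paper obtains Ore only after building the whole $id$-cycle machinery of Sections~2--3. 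Your approach is far more economical for Ore itself; the paper's approach is what is needed for the genuine generalizations (Theorems~\ref{Zhu} and~\ref{Thm:icd}) that Ore falls out of.
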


For a vertex $v$ of a graph $G$, denote by $N_2(v)$ the vertices which are at distance of 2 from $v$ in $G$. In order to weaken the condition in Theorem \ref{Ore} (Ore's condition), Zhu et al. \cite{Y.Zhu:89} gave the definition of the first implicit-degree of the vertex $v$ based on the degrees of vertices in $N(v)\cup N_2(v)\cup \{v\}$.

\begin{definition}[Zhu et al. \cite{Y.Zhu:89}]
\label{Def:implicit}
Let $G$ be a graph on $n$ vertices and $v$ a vertex in $G$. If $N_2(v)\neq \emptyset$, then let $d(v)=k+1$, $M_2=\max\{d(u)|u\in N_2(v)\}$. Denote by $d_1\leq d_2\leq \cdots \leq d_{k+1}\leq d_{k+2}\leq \cdots$ the nondecreasing degree sequence of vertices in $N(v)\cup N_2(v)$. Then the first implicit-degree of $v$ is defined as
\begin{equation}
d_1(v)=
\begin{cases}
\max\{d_{k+1}, k+1\},&\text{if~$d_{k+1}>M_2$};\cr
\max\{d_k, k+1\},&\text{otherwise}.\cr
\end{cases}
\end{equation}
If $N_2(v)=\emptyset$, then $d(v)=n-1$. In this case, let $d_1(v)=d(v)=n-1$.
\end{definition}

It is clear that $d_1(v)\geq d(v)$ for every vertex $v\in V(G)$. Zhu et al. \cite{Y.Zhu:89} obtained the following result as a generalization of Theorem \ref{Ore}.

\begin{theorem}[Zhu et al. \cite{Y.Zhu:89}]
\label{Zhu}
Let $G$ be a $2$-connected graph on {$n\geq 3$} vertices. If $d_1(u) + d_1(v)\geq n$ for every pair of nonadjacent vertices $u,v\in V(G)$, then $G$ is Hamiltonian.
\end{theorem}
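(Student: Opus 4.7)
The plan is to argue by contradiction along the longest-cycle lines familiar from Ore's theorem, replacing each appeal to the ordinary degree with a careful unfolding of Definition~\ref{Def:implicit}.

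Suppose $G$ is $2$-connected, satisfies $d_1(u)+d_1(v)\geq n$ for every pair of nonadjacent vertices $u,v$, yet contains no Hamilton cycle. Let $C=v_0v_1\cdots v_{p-1}v_0$ be a longest cycle in $G$; by assumption $p<n$. Pick a component $H$ of $G-V(C)$. Using $2$-connectedness I would extract two attachment vertices $v_i,v_j\in N_G(V(H))\cap V(C)$ joined through $H$ by a path of length at least $2$. The standard P\'osa-style rotation, applied to the assumption that $C$ is longest, then yields $v_{i+1}v_{j+1}\notin E(G)$ together with the usual ``crossing'' obstruction: for every $v_k\in N(v_{i+1})\cap V(C)$ lying on the appropriate arc, the successor $v_{k+1}$ cannot belong to $N(v_{j+1})$, and neither $v_{i+1}$ nor $v_{j+1}$ is adjacent to $V(H)$.

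I would then apply the hypothesis to the nonadjacent pair $v_{i+1},v_{j+1}$, obtaining $d_1(v_{i+1})+d_1(v_{j+1})\geq n$, and unfold the definition of $d_1$. A lower bound $d_1(x)\geq t$ says that in the nondecreasing degree sequence of $N(x)\cup N_2(x)$ the entry in position $d(x)+1$ (when $d_{k+1}>M_2$) or in position $d(x)$ (otherwise) reaches $t$. So for at least one of $x=v_{i+1}$, $x=v_{j+1}$, either enough of its ordinary neighbors already have degree $\geq t$---which puts us back in Ore's regime and produces a contradiction via the crossing obstruction---or some $w\in N_2(x)$ has $d(w)\geq t$, i.e., a high-degree vertex sits at distance exactly two from $x$.

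The main obstacle I foresee is converting this second alternative into a concrete extension of $C$. If $w\in N_2(v_{i+1})$ has many neighbors, then picking a common neighbor $z\in N(v_{i+1})\cap N(w)$ one attempts to reroute $C$ via a detour $\cdots v_m\to w\to z\to v_{i+1}\cdots$ for a well-chosen $v_m\in N(w)\cap V(C)$, while inserting a segment of $H$ between $v_i$ and $v_j$, and to exhibit the resulting closed walk as a cycle strictly longer than $C$. Ensuring that such an $m$ exists requires comparing the positions of the neighbors of $w$ against the ``forbidden'' positions singled out by the crossing obstruction, and the argument must be split according to the two cases of Definition~\ref{Def:implicit} and according to whether $w$ lies in $V(C)$, in $V(H)$, or in another component of $G-V(C)$. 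This case analysis is where I expect the bulk of the technical work to sit, while the outer skeleton is essentially Ore's proof adapted to $d_1$.
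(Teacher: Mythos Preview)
Your proposal is a direct longest-cycle argument in the spirit of the original Zhu--Li--Deng paper, and while the skeleton is sound, it is \emph{not} how the present paper establishes Theorem~\ref{Zhu}. Here Theorem~\ref{Zhu} is obtained as a corollary of the main result, Theorem~\ref{Thm:icd}, via Fact~\ref{rem:1}: one sets $A=\{a:d_1(a)<n/2\}$ and $B=\{b:d_1(b)\geq n/2\}$, observes from the hypothesis that any two vertices of $A$ are adjacent (so $G[A]$ is a clique), and then, using $2$-connectedness to find one or two $A$--$B$ edges as needed, writes down an explicit spanning $id$-cycle; Theorem~\ref{Thm:icd} then gives a Hamilton cycle immediately. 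No rotation, no crossing obstruction, no unfolding of Definition~\ref{Def:implicit} is required at this stage---all of that work has been absorbed into the proof of Theorem~\ref{Thm:icd} and its supporting Lemmas~\ref{nan}--\ref{contain}.

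What each approach buys: your route is self-contained and stays close to the classical Ore/P\'osa template, but, as you correctly anticipate, the case analysis triggered by the two branches of Definition~\ref{Def:implicit} and by the possible locations of the high-degree vertex $w$ is genuinely delicate (this is essentially the content of \cite{Y.Zhu:89}). The paper's route trades that analysis for the more general $id$-cycle framework: once Theorem~\ref{Thm:icd} is in hand, Theorem~\ref{Zhu} drops out in a few lines, and the same machinery simultaneously yields Fact~\ref{rem:2} and the other corollaries. If your goal is only Theorem~\ref{Zhu}, your plan is viable; if you want it as a consequence of the present paper, the intended argument is the short one through Fact~\ref{rem:1}.
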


Let $G$ be a graph and $X$ a subset of of $V(G)$. If there exists a cycle $C$ in $G$ with $X\subseteq V(C)$, then we say $X$ is {\it cyclable} in $G$. A subgraph $H$ of $G$ is called {\em cyclable} if $V(H)$ is cyclable. Apparently, $G$ is Hamiltonian if and only if every spanning subgraph of $G$ is cyclable.

For a graph $G$, a vertex of degree at least $|V(G)|/2$ is called {\it heavy}.
In 1992, Shi \cite{R.Shi:92} proved the following result.

\begin{theorem}[Shi \cite{R.Shi:92}]
\label{Shi}
Let $G$ be a $2$-connected graph on {$n\geq 3$} vertices and $S=\{v:d(v)\geq n/2,v\in V(G)\}$. Then $S$ is cyclable in $G$.
\end{theorem}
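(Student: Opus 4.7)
Suppose for contradiction that $S$ is not cyclable, and among all cycles of $G$ pick one $C$ that first maximises $|V(C)\cap S|$ and, subject to this, has maximum length. Such a $C$ exists since $G$ is $2$-connected, and by hypothesis some $v\in S\setminus V(C)$ is left off the cycle. The strategy is to use the extremality of $C$ together with $d(v)\ge n/2$ to overcount the vertices of $G$.

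First, apply Menger's theorem in the $2$-connected graph $G$ to obtain two internally disjoint paths from $v$ to $V(C)$, and glue them at $v$ into a single $(u_1,u_2)$-path $P$ through $v$ whose only vertices on $C$ are the endpoints $u_1,u_2$. These endpoints split $C$ into arcs $A$ and $B$, giving alternative cycles $C_A=A\cup P$ and $C_B=B\cup P$. From $V(A)\cap V(P)=V(B)\cap V(P)=\{u_1,u_2\}$ one reads off the identity
\[
|V(C_A)\cap S|-|V(C)\cap S|=|V(P)\cap S|-|V(B)\cap S|,
\]
and the symmetric one for $C_B$. Since $v\in V(P)\cap S$, the extremal choice of $C$ forces $|V(A)\cap S|,|V(B)\cap S|\ge|V(P)\cap S|\ge 1$, so each arc of $C$ carries a heavy vertex; when equality holds in the $S$-count, the secondary length maximality of $|V(C)|$ also gives $|V(P)|\le\min\{|V(A)|,|V(B)|\}$.

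Next I would exploit $d(v)\ge n/2$, splitting on $|N(v)\cap V(C)|$. If $v$ has at least two neighbours on $V(C)$, pick $u_1,u_2\in N(v)\cap V(C)$ and take $P=u_1vu_2$; then $|V(P)\cap S|=1$, and running the identity above at every pair of neighbours of $v$ on $C$, together with an Ore--Bondy-type consecutive-vertex check on $C$ (so that no two consecutive vertices of $C$ lie in $N(v)$ without triggering an improvement), will produce a cycle with strictly more heavy vertices than $C$, contradicting maximality. If instead $|N(v)\cap V(C)|\le 1$, then almost all of $N(v)$ lies in the component $H$ of $G-V(C)$ containing $v$, so $|V(H)|\ge n/2$; combined with $|V(C)|+|V(H)|\le n$ this forces $|V(C)|\le n/2$. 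Iterating the Menger-path construction over \emph{every} attachment vertex of $H$ on $C$ (using that $H$ is connected) then yields at least one heavy vertex on each arc of $C$ cut off by consecutive attachments, and adding the $\ge n/2$ neighbours of $v$ in $V(H)\cup V(C)$ overshoots $n$.

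The \emph{main obstacle} is the second case, where $v$ may have no neighbour on $C$ at all: the reroute path $P$ then traverses many vertices of $H$, so one cannot simply gain $v$ while losing nothing. The inclusion--exclusion among heavy vertices in $V(C)$, in $V(P)\setminus V(C)$, and in $V(H)\setminus V(P)$ must be carried out across all attachment vertices of $H$ simultaneously; I expect the proof to close by choosing, via $2$-connectedness, two internally disjoint $v$--$V(C)$ paths whose endpoints lie in different arcs of $C$ determined by the heavy vertices of $V(C)$, and then pairing the $\ge n/2$ neighbours of $v$ with the forced heavy vertices on each arc by a pigeonhole.
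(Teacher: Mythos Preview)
The paper does not prove Shi's theorem independently; it is a cited result, and within the paper it falls out as an immediate corollary of Theorem~\ref{BinLong} (or of the main Theorem~\ref{Thm:icd} via Fact~\ref{rem:2}): when $|S|\ge 3$ any cyclic ordering of $S$ is already an $o$-cycle (indeed an $id$-cycle), hence cyclable, and when $|S|\le 2$ the $2$-connectivity of $G$ with $n\ge 3$ gives a cycle through $S$ trivially. So the paper's route is entirely different from your direct extremal-cycle approach.

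Your plan, as it stands, has genuine gaps in both cases. When $|N(v)\cap V(C)|\ge 2$, the identity you set up only shows that the open arc between any two consecutive $C$-neighbours of $v$ must contain a heavy vertex; you then assert this ``will produce a cycle with strictly more heavy vertices than $C$'', but no such cycle is exhibited, and the Ore--Bondy check only excludes adjacent neighbours of $v$ on $C$, which does not by itself yield a contradiction. When $|N(v)\cap V(C)|\le 1$, you correctly identify the obstacle but the closing pigeonhole is not a proof: the attachment set of $H$ on $C$ may consist of just two vertices, giving only two arcs and hence only two forced heavy vertices on $C$, nowhere near enough to overcount against the $\ge n/2$ neighbours of $v$, which lie mostly in $H$ and are not constrained by the arc argument at all. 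The missing ingredient in a direct proof is a mechanism that brings the \emph{other} heavy vertices of $S$ on $C$ into the rerouting; your argument only ever uses the single off-cycle vertex $v$ and never exploits that the heavy vertices forced onto $C$ themselves have degree $\ge n/2$.
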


It is clear that Theorem \ref{Shi} implies Theorems \ref{Dirac} and \ref{Ore}.

Recently Li et al. \cite{B.Li:12} gave another generalization of Ore's condition.

\begin{definition}[Li et al. \cite{B.Li:12}]
\label{OCycle}
Let $G$ be a graph on $n$ vertices and $C'=v_0v_1\cdots v_{p-1}v_0$ a vertex sequence in $G$ with $p\geq 3$ ($v_i\neq v_j$ for all $i,j=0,1,\ldots,p-1$, $i\neq j$). If for any successive vertices $v_i$, $v_{i+1}$ on $C'$, either $v_iv_{i+1}\in E(G)$ or $d(v_i)+d(v_{i+1})\geq n$ (indices are taken modulo $p$), then $C'$ is called an {\em Ore-cycle} of $G$ or briefly, an {\em $o$-cycle} of $G$.
\end{definition}

\begin{theorem}[Li et al. \cite{B.Li:12}]
\label{BinLong}
Let {$C'$ an $o$-cycle of a graph $G$}. Then $C'$ is cyclable in $G$.
\end{theorem}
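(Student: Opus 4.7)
The plan is to proceed by contradiction, combining an extremal choice of cycle with a Pósa-style rotation argument tailored to the cyclic structure imposed by $C'$. First I would regard $C'$ as decomposing into maximal arcs $P_1,P_2,\ldots,P_k$ of genuine edges of $G$, separated by virtual edges $v_iv_{i+1}\notin E(G)$ for which $d(v_i)+d(v_{i+1})\geq n$. If $k=1$ then $C'$ itself is a cycle in $G$ and we are done, so assume at least one virtual edge is present. Then I would pick a cycle $C$ in $G$ maximising $|V(C)\cap V(C')|$ and, subject to that, $|V(C)|$. Assuming for contradiction $V(C')\not\subseteq V(C)$, fix $w\in V(C')\setminus V(C)$ lying on some arc $P_j$.

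The heart of the argument is the standard extremal bookkeeping on cyclic positions of $V(C')\cap V(C)$ along $C$. If $w$ had neighbours $x,y\in V(C)$ with $y$ the successor of $x$ on $C$, we could replace the edge $xy$ by the path $xwy$ and increase $|V(C)\cap V(C')|$, contradicting the choice of $C$; hence consecutive $C$-neighbours of $w$ must be separated on $C$. The same obstruction applies to the endpoints of $P_j$ (the neighbours of $w$ on $C'$ that happen to lie on $C$), giving us a system of forbidden positions around $C$. Now I would invoke the virtual-edge condition at the two boundary non-edges of $P_j$, say $v_iv_{i+1}$ with $v_{i+1}$ an endpoint of $P_j$: the hypothesis $d(v_i)+d(v_{i+1})\geq n$ forces so many neighbours of $\{v_i,v_{i+1}\}$ that some $x\in N(v_i)\cap V(C)$ and $y\in N(v_{i+1})\cap V(C)$ must occupy a configuration on $C$ allowing $C$ to be rerouted through the missing sub-arc of $P_j$ (including $w$), yielding a cycle $C^\star$ with $|V(C^\star)\cap V(C')|>|V(C)\cap V(C')|$, the desired contradiction.

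The main obstacle I expect is the mismatch between a \emph{global} degree condition $d(v_i)+d(v_{i+1})\geq n$ and a rotation argument that only cares about neighbours lying on the current cycle $C$. When $v_i,v_{i+1}\in V(C)$ this is essentially the Bondy--Chvátal situation and is manageable by the classical pigeonhole on successors/predecessors along $C$. The delicate case is when one or both of $v_i,v_{i+1}$ lies outside $V(C)$: there I would need a preliminary insertion lemma asserting that, without loss of generality, any vertex $u\notin V(C)$ with $d(u)$ large can be absorbed into $C$ without decreasing $|V(C)\cap V(C')|$, so that all boundary vertices of the arcs are already on $C$. This absorption step, in the spirit of the cyclability arguments underlying Theorem~\ref{Shi}, is where I anticipate most of the technical care will be needed, since one must insert such a $u$ without displacing any vertex already counted in $V(C)\cap V(C')$.
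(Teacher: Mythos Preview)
The paper does not contain a proof of Theorem~\ref{BinLong}; it is quoted from \cite{B.Li:12} as background, and the paper's own work is the $id$-cycle analogue, Theorem~\ref{Thm:icd}. So there is no in-paper argument to compare your attempt against directly.

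That said, the paper's machinery yields a proof of Theorem~\ref{BinLong} that is shorter than, and structurally different from, your plan. Instead of fixing a genuine cycle $C$ in $G$ and attempting to absorb the missing vertices of $C'$, one stays inside the class of $o$-cycles: among all $o$-cycles $C$ with $V(C')\subseteq V(C)$, choose one with $def(C)$ minimal; if $def(C)\geq 1$, relabel so that $v_0v_{p-1}\notin E(G)$, hence $d(v_0)+d(v_{p-1})\geq n$, and apply Lemma~\ref{nan}. The proof of Lemma~\ref{nan} uses only the degree sum at the single non-edge $v_0v_{p-1}$, and both of its operations (inserting a common neighbour from $R$, or the one P\'osa rotation of Fig.~\ref{sum n}) visibly preserve the $o$-cycle property, producing an $o$-cycle of strictly smaller deficit and contradicting minimality. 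Thus $def(C)=0$ and $C$ is the required cycle.

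Your proposal takes the ``external'' route---work with an actual cycle $C$ and try to reroute it through a missing arc of $C'$---and the obstacles you flag are genuine. You never secure a starting cycle $C$; the key rerouting step (that $d(v_i)+d(v_{i+1})\geq n$ forces a usable configuration of neighbours \emph{on} $C$) is asserted rather than argued, and is exactly where the global-degree versus on-cycle-degree mismatch bites; and the absorption lemma you postulate for endpoints off $C$ is essentially the whole difficulty repackaged. The deficit-minimisation argument above sidesteps all of this by never leaving the class of $o$-cycles, so that the single degree-sum hypothesis at one non-edge is always available and always suffices.
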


Obviously, Theorem \ref{BinLong} implies Theorems \ref{Ore} and \ref{Shi}. Our aim in this paper is to consider whether Theorem \ref{BinLong} can be generalized to the first implicit-degree condition.

\begin{definition}
\label{OCI}
Let $G$ be a graph on $n$ vertices and $C'=v_0v_1\cdots v_{p-1}v_0$ a vertex sequence in $G$ with $p\geq 3$ ($v_i\neq v_j$ for all $i,j=0,1,\ldots,p-1$, $i\neq j$). If for any successive vertices $v_i$, $v_{i+1}$ on $C'$, either $v_iv_{i+1}\in E(G)$ or $d_1(v_i)+d_1(v_{i+1})\geq n$ (indices are taken modulo $p$), then we call $C'$ an {\it implicit-Ore-cycle} of $G$ or briefly, an {\it $io$-cycle} of $G$. Specifically, if either  $v_iv_{i+1}\in E(G)$ or $d_1(v_i)\geq n/2$ and $d_1(v_{i+1})\geq n/2$ for all $i=0,1,\ldots,p-1$, then we call $C'$ an {\it implicit-Dirac-cycle} of $G$ or briefly, an {\it $id$-cycle} of $G$.
\end{definition}

\begin{problem}
\label{Pro:ico}
Is every $io$-cycle cyclable ?
\end{problem}

Although we are unable to solve Problem \ref{Pro:ico}, we can show that every $id$-cycle is cyclable.

\begin{theorem}
\label{Thm:icd}
Let {$C'$ an $id$-cycle of a graph $G$}. Then $C'$ is cyclable in $G$.
\end{theorem}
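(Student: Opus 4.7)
The plan is to argue by contradiction, following the pattern of the proof of Theorem \ref{BinLong} but upgrading every use of a degree to a use of a first implicit-degree in the spirit of the passage from Theorem \ref{Ore} to Theorem \ref{Zhu}. Suppose $G$ is a counterexample: $C' = v_0 v_1 \cdots v_{p-1} v_0$ is an $id$-cycle but $V(C')$ is not cyclable in $G$. A standard reduction lets me take $G$ to be $2$-connected (otherwise work inside the block containing $V(C')$, since $V(C')$ must lie in a single block once $p \ge 3$ and consecutive pairs are either adjacent or joined by many internally disjoint paths through heavy vertices). Among all cycles $C$ in $G$ I choose one that first maximizes $|V(C') \cap V(C)|$ and, subject to that, maximizes $|V(C)|$.

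If $V(C') \subseteq V(C)$ we are done, so there is some $v \in V(C') \setminus V(C)$. Walking along $C'$ from $v$ in both directions until first hitting $V(C)$ gives a detour whose interior vertices all lie outside $C$; every such interior vertex is incident in $C'$ to at least one non-edge of $G$ and is therefore implicitly heavy, i.e.\ satisfies $d_1(\cdot) \ge n/2$. The crux is to convert this implicit-degree bound into usable actual neighbors on $C$. By Definition \ref{Def:implicit}, either $d(v) \ge n/2$ outright, in which case a Shi-style crossing argument on the chord pattern of $C$ produces either a longer cycle or one with strictly larger intersection with $V(C')$ through $v$, contradicting the extremal choice; or the implicit-degree is certified by a witness $u \in N_2(v)$ with $d(u) \ge n/2$, linked to $v$ via a common neighbor $w \in N(v) \cap N(u)$.

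The main obstacle, and the step I expect to spend most of the proof on, is the second case. Here the plan is to use the triple $(v, w, u)$ to build an auxiliary vertex sequence $C''$ obtained from $C'$ by inserting $u$ next to $v$ (so that the new consecutive pairs $v_{j-1} u$, $u v_{j+1}$ inherit, via $w$ and the inequality $d_1(v_{j\pm 1}) + d(u) \ge n/2 + n/2 = n$, the Ore-condition required by Definition \ref{OCycle}), and then verify that $C''$ is an $o$-cycle containing $V(C')$. Theorem \ref{BinLong} would then produce a cycle of $G$ through $V(C'') \supseteq V(C')$, contradicting our standing assumption. The delicate bookkeeping is to ensure that such a witness $u$ can always be chosen to be distinct from vertices already used on $C'$, that the insertion really preserves the $o$-cycle property at every consecutive pair (in particular at $v$ itself on both sides), and that when no single insertion works one can iterate the argument to a smaller configuration on which an induction (say on $n - |V(C')|$, or on the number of non-edges of $C'$) closes the case. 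Managing this interaction between the detour segments of $C'$, the witness vertex supplied by Definition \ref{Def:implicit}, and the extremal cycle $C$ is the technical heart of the argument.
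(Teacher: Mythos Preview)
Your proposal has a genuine gap at its core, and the intended reduction to Theorem~\ref{BinLong} does not go through.

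The central problem is the step where you ``insert $u$ next to $v$'' and claim the resulting sequence $C''$ is an $o$-cycle. The $o$-cycle condition in Definition~\ref{OCycle} requires \emph{actual} degrees: for every non-adjacent consecutive pair one needs $d(v_i)+d(v_{i+1})\ge n$. You only have $d(u)\ge n/2$ for the inserted witness, while the neighbouring break-vertex $v_{j\pm1}$ satisfies merely $d_1(v_{j\pm1})\ge n/2$; the inequality $d_1(v_{j\pm1})+d(u)\ge n$ that you write down is \emph{not} the Ore condition, and there is no reason $d(v_{j\pm1})\ge n/2$. Worse, every other break point of $C'$ is untouched by the insertion and still carries only an implicit-degree bound, so $C''$ is at best another $id$-cycle, not an $o$-cycle, and Theorem~\ref{BinLong} does not apply. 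Your suggested ``iteration'' would have to replace every light break-vertex by a genuinely heavy one, but each insertion may create new non-edges on $C''$ with the same defect, the witnesses need not be distinct from one another or from $V(C')$, and there is no monotone quantity on which to induct. (A smaller issue: Definition~\ref{Def:implicit} does not say the high-degree witness lies in $N_2(v)$; it lies in $N(v)\cup N_2(v)$, and locating it precisely is nontrivial. Also, the reduction to a single block is unjustified---consecutive vertices of an $id$-cycle need not be adjacent, so there is no a~priori reason $V(C')$ sits in one block; indeed the paper never assumes $2$-connectivity.)

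The paper's proof avoids these difficulties by working not with an extremal cycle in $G$ but with an extremal $id$-cycle: one that first minimizes the number of non-edges among consecutive pairs (\emph{def-minimal}) and then maximizes the number of genuinely heavy break-vertices (\emph{hb-maximal}). The key technical lemma (Lemma~\ref{Li}) shows that if a break-vertex $v_0$ has $d(v_0)<n/2$ and satisfies mild structural hypotheses, then some vertex in $N_{C'}(v_0)^-$ has degree at least $d_1(v_0)$, and a rotation of $C'$ through that vertex strictly increases the heavy-break count---contradicting hb-maximality. This forces every light break-vertex to be a tightly constrained ``strange'' vertex with $N_2\subseteq V(P_i)$ (Lemma~\ref{contain}), after which a short case analysis pins the deficit down to exactly $2$ and produces a contradiction. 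The point is that the implicit-degree information is exploited \emph{inside} the $id$-cycle via rotations, not by trying to promote the sequence to an $o$-cycle.
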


Note that in Definition \ref{OCycle}, if either $v_iv_{i+1}\in E(G)$ or $d(v_i)\geq n/2$ and $d(v_{i+1})\geq n/2$ for all $i=0,1,\ldots,p-1$, then we can similarly call $C'$ a {\it Dirac-cycle} of $G$ or briefly, a {\it $d$-cycle} of $G$. It is clear that a $d$-cycle is a special $o$-cycle and we can regard an $id$-cycle as a generalization of a $d$-cycle.

\begin{fact}
\label{rem:1}
Theorem \ref{Thm:icd} implies Theorem \ref{Zhu}.
\end{fact}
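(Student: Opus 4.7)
The plan is to deduce Theorem \ref{Zhu} from Theorem \ref{Thm:icd} by constructing, under the hypotheses of Theorem \ref{Zhu}, an $id$-cycle $C'$ whose vertex set equals $V(G)$; Theorem \ref{Thm:icd} then yields a cycle in $G$ through $V(C')=V(G)$, i.e., a Hamilton cycle.

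Partition $V(G)$ into $S=\{v:d_1(v)<n/2\}$ and $T=V(G)\setminus S$. In an $id$-cycle a consecutive pair inside $T$ is automatically acceptable, while any consecutive pair involving an $S$-vertex must actually be an edge of $G$. First I observe that $S$ induces a clique: if $u,v\in S$ were nonadjacent, the hypothesis of Theorem \ref{Zhu} gives $d_1(u)+d_1(v)\geq n$, contradicting $d_1(u),d_1(v)<n/2$. A quick count then shows $|S|\leq\lceil n/2\rceil$, since otherwise any $u\in S$ would have $d(u)\geq |S|-1\geq\lceil n/2\rceil$ and hence $d_1(u)\geq d(u)\geq n/2$, contradicting $u\in S$. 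In particular $|T|\geq 2$ whenever $n\geq 4$; and the case $n=3$ is trivial because 2-connectedness forces $G=K_3$, for which every vertex $v$ satisfies $d_1(v)=n-1\geq n/2$ and any cyclic listing of $V(G)$ is already an $id$-cycle.

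The heart of the argument is to arrange $V(G)$ cyclically so that $S$ occupies one contiguous arc and $T$ the complementary arc, with the two boundary transitions being genuine edges of $G$. Inside the $S$-arc everything is fine because $S$ is a clique, and inside the $T$-arc everything is fine because every $T$-vertex is heavy. What I need is a choice of two distinct $t_1,t_2\in T$ and two vertices $u_1,u_2\in S$ (distinct when $|S|\geq 2$) with $u_1t_1,u_2t_2\in E(G)$. I expect this to be the main point, and it is exactly where 2-connectedness enters: if no such choice existed, then either there is no edge between $S$ and $T$ (so $G$ is disconnected) or all edges from $S$ to $T$ share a single endpoint $t^{*}\in T$ (so removing $t^{*}$ disconnects $S$ from $T\setminus\{t^{*}\}$), each contradicting 2-connectedness. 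The special case $|S|=1$ is handled by noting that the unique $S$-vertex has $d\geq 2$ in $G$ and so has two distinct neighbors in $T$.

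Given such $u_1,u_2,t_1,t_2$, I write $C'$ explicitly: start at $t_1$, list the remaining vertices of $T$ in any order ending at $t_2$, then traverse the clique $S$ from $u_2$ to $u_1$ in any order, and close back at $t_1$. Every consecutive pair is then either inside $T$ (both implicit-heavy), inside $S$ (adjacent, since $S$ is a clique), or one of the two boundary pairs $u_2t_2$, $u_1t_1$ (edges of $G$ by construction); hence $C'$ is an $id$-cycle with $V(C')=V(G)$. Applying Theorem \ref{Thm:icd} produces a cycle in $G$ containing $V(G)$, which is the Hamilton cycle asserted by Theorem \ref{Zhu}.
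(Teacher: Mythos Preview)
Your approach is essentially the paper's: partition $V(G)$ into the implicit-light vertices $S$ (the paper's $A$) and the implicit-heavy vertices $T$ (the paper's $B$), note that $S$ is a clique, and use $2$-connectedness to find two $S$--$T$ edges with distinct endpoints on each side so as to assemble a spanning $id$-cycle. Two small points worth tightening: first, dispose explicitly of the case $S=\emptyset$, where any cyclic listing of $V(G)$ is already an $id$-cycle and your construction as written asks for $u_1,u_2\in S$; second, in your $2$-connectedness step the negation of ``there is a matching of size two between $S$ and $T$'' is that all $S$--$T$ edges pass through a single vertex, and that vertex may lie in $S$ as well as in $T$---the symmetric case $s^{*}\in S$ (removing $s^{*}$ separates $S\setminus\{s^{*}\}$ from $T$) is handled identically but should be stated.
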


\begin{proof}
Let $A=\{a:d_1(a)<n/2, a\in V(G)\}$. For any vertices $u,v\in A$ ($u\neq v$), we have $d_1(u)+d_1(v)<n$. So $uv\in E(G)$. This implies that $G[A]\cong K_{|A|}$. Let $B=\{b:d_1(b)\geq n/2, b\in V(G)\}$. Now, consider the size of $|A|$ and $|B|$, respectively.

If $|B|=0$ or $|A|=0$, then any vertex sequence of length $n$ is an $id$-cycle in $G$. By Theorem \ref{Thm:icd}, $G$ is Hamiltonian.

If $|A|=1$, then let $A=\{a\}$ and $B=\{b_1,b_2,\ldots, b_{n-1}\}$. Since $G$ is $2$-connected, there are at least two neighbors of $a$ in $B$, say $b_1$ and $b_2$. Thus $b_1ab_2b_3\cdots b_{n-1}b_1$ is an $id$-cycle of length $n$. By Theorem \ref{Thm:icd}, $G$ is Hamiltonian. The proof is similar when $|B|=1$.

If $|A|\geq 2$ and $|B|\geq 2$, since $G$ is $2$-connected, there exist $a_1, a_2\in A$~$(a_1\neq a_2)$ and $b_1,b_2\in B$~$(b_1\neq b_2)$ such that $a_1b_1, a_2b_2\in E(G)$. Let $A=\{a_1,a_2,\ldots, a_k\}$ and $B=\{b_1,b_2,\ldots, b_{n-k}\}$. Then $b_1a_1a_3a_4\cdots a_ka_2b_2b_3\cdots b_{n-k}b_1$ is an $id$-cycle of length $n$ in $G$. By Theorem \ref{Thm:icd}, $G$ is Hamiltonian.
\end{proof}

\begin{fact}
\label{rem:2}
Let $G$ be a $2$-connected graph on {$n\geq 3$} vertices and $S=\{v:d_1(v)\geq n/2,v\in V(G)\}$. Then $S$ is cyclable in $G$.
\end{fact}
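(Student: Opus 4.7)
The plan is to reduce the statement to Theorem \ref{Thm:icd} by exhibiting, whenever possible, an $id$-cycle whose vertex set is precisely $S$; the small cases where this construction is impossible will be handled directly via 2-connectivity.

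First I would dispose of the cases $|S|\leq 2$, since the definition of an $id$-cycle requires length $p\geq 3$. If $|S|=0$, any cycle of $G$ certifies cyclability, and such a cycle exists because $G$ is 2-connected with $n\geq 3$. If $|S|=1$, every vertex of a 2-connected graph lies on a cycle, so we are done. If $|S|=2$, Menger's theorem supplies two internally disjoint paths between the two vertices of $S$, and their union is a cycle containing both.

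For the main case $|S|\geq 3$, I would enumerate $S$ in an arbitrary order as $s_1,s_2,\ldots,s_{|S|}$ and form the vertex sequence
\[
C'=s_1s_2\cdots s_{|S|}s_1.
\]
The $s_i$ are pairwise distinct and $p=|S|\geq 3$, so $C'$ satisfies the combinatorial preconditions in Definition \ref{OCI}. For every pair of consecutive vertices $s_i,s_{i+1}$ on $C'$, both lie in $S$ and hence both satisfy $d_1\geq n/2$; thus the $id$-cycle condition holds on every edge of $C'$, irrespective of whether $s_is_{i+1}\in E(G)$. Applying Theorem \ref{Thm:icd} to $C'$ produces a cycle $C$ in $G$ with $S=V(C')\subseteq V(C)$, establishing cyclability of $S$.

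The argument is short because the $id$-cycle condition is tailored precisely to be automatic whenever both endpoints of each step are heavy in the implicit sense. The only subtle point is the length requirement $p\geq 3$ in Definition \ref{OCI}, which forces the separate treatment of $|S|\leq 2$; I expect this to be the main (and essentially the only) obstacle, and it is overcome by a direct appeal to 2-connectivity rather than to Theorem \ref{Thm:icd}.
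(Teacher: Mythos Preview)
Your proposal is correct and matches the paper's approach: the paper does not even write out a proof, merely stating that Fact~\ref{rem:2} ``can be directly obtained from Theorem~\ref{Thm:icd}'', which is exactly the reduction you carry out by listing $S$ as an $id$-cycle. Your explicit handling of the cases $|S|\le 2$ via 2-connectivity is a welcome addition that the paper glosses over.
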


Obviously, Fact \ref{rem:2} is a generalization of Theorem \ref{Shi} and can be directly obtained from Theorem \ref{Thm:icd}.

\section{Definitions and Lemmas}

In this section, we will give some additional definitions and useful lemmas.

Let $G$ be a graph and $C'=v_0v_1\cdots v_{p-1}v_0$~($p\geq 3$) an $id$-cycle in $G$ with a fixed orientation. For vertices $x,y\in V(C')$, let $xC'y$ be the segment on $C'$ from $x$ to $y$ along the direction of $C'$ and $x\overline{C'}y$ the segment on $C'$ along the reverse direction. For a vertex $v_i\in V(C')$, if $v_{i-1}v_i$ or $v_iv_{i+1}\not\in E(G)$, then we call $v_i$ a {\it break-vertex} on $C'$. Denote by $Bre(C')$ the set of break-vertices on $C'$. Let
$$Bre^+(C')=\{v_i:v_iv_{i+1}\not\in E(G)\}~\text{and}~Bre^-(C')=\{v_i:v_{i-1}v_i\not\in E(G)\}.$$
Then $Bre(C')=Bre^+(C')\cup Bre^-(C')$. Note that $Bre^+(C')\cap Bre^-(C')$ is not necessarily empty.
For a vertex $v_i\in V(C')$, let $v^+_i=v_{i+1}$ and $v^-_i=v_{i-1}$. Then $v^+_i$ and $v^-_i$ represent the immediate successor and predecessor of $v_i$ on $C'$, respectively. Denote by $N_{C'}(v_i)^-$
the predecessors of vertices in $N_{C'}(v_i)$. To measure the gap between $C'$ and a cycle, we define the {\it deficit-degree} of $C'$ as
$$def(C')=|\{i:v_iv_{i+1}\not\in E(G)\}|.$$
If $def(C')\leq def(C)$ for any $id$-cycle $C$ satisfying $V(C')\subseteq V(C)$, then we say $C'$ is {\it def-minimal}.
Let $u$ be a break-vertex on $C'$. We say $u$ is a {\it heavy-break-vertex} if $d(u)\geq |V(G)|/2$. Denote by $Hb(C')$ the set of heavy-break-vertices on $C'$. To measure the difference between $C'$ and a $d$-cycle, we define the {\it heavy-index} of $C'$ as
$$hb(C')=|Hb(C')\cap Bre^+(C')|+|Hb(C')\cap Bre^-(C')|.$$
If $hb(C')\geq hb(C)$ for any $id$-cycle $C$ satisfying $V(C')\subseteq V(C)$ and $def(C')=def(C)$, then we say $C'$ is {\it hb-maximal}.
%\begin{definition}
%Let $C'$ be an $id$-cycle in a graph $G$.\\
%$(a)$ If $def(C')\leq def(C)$ for any $id$-cycle $C$ satisfying $V(C')\subseteq V(C)$, then we say $C'$ is {\it def-minimal}.\\
%$(b)$ If $hb(C')\geq hb(C)$ for any $id$-cycle $C$ satisfying $V(C')\subseteq V(C)$ and $def(C')=def(C)$, then we say $C'$ is {\it hb-maximal}.\\
%\end{definition}

Let $P=u_0u_1\cdots u_{t-1}$ be a path in $G$. Then we call $u_0$ and $u_{t-1}$ the {\it end-vertices} of $P$. For vertices $a,b\in V(P)$, denote by $aPb$ the segment on $P$ from $a$ to $b$. If $a=b$, then $aPb=\{a\}$. Apparently, an $id$-cycle $C'$ in $G$ is composed of some vertex-disjoint paths and we can write $C'=x_1P_1y_1x_2P_2y_2\cdots x_sP_sy_sx_1$, where $x_i$ and $y_i$ are the end-vertices of $P_i$ satisfying $d_1(x_i)\geq |V(G)|/2$ and $d_1(y_i)\geq |V(G)|/2$ for all $i=1,2,\ldots,s$. Hence, the set of break-vertices on $C'$ can be regarded as the set of end-vertices of $P_i$~($i=1,2\ldots,s$).

Let $C'=v_0v_1\cdots v_{p-1}v_0$ be an $id$-cycle in a graph $G$. Then we have $def(C')\geq 0$ and $hb(C')\leq 2def(C')$. If $def(C')=0$, then $C'$ is a cycle. If $hb(C')=2def(C')$, then $C'$ is a $d$-cycle and cyclable. In this paper, we mainly consider the case that $def(C')>0$ and $hb(C')<2def(C')$. In order to make the paper easy to follow, we name a specific kind of break-vertex as ``strange-vertex''.

\begin{definition}
\label{strange}
Let $G$ be a graph on $n\geq3$ vertices and $C'=x_1P_1y_1x_2P_2y_2\cdots x_sP_sy_sx_1$ an $id$-cycle in $G$. Let $R$ be the subgraph of $G$ induced by $V(G)\backslash V(C')$ and $u$ an end-vertex of $P_i$. If the following conditions hold: \\
$(a)$ $d(u)< n/2$;\\
$(b)$ $d(v)<d_1(u)$ for every vertex $v\in N_R(u)$;\\
$(c)$ $N(u)\cap V(P_j)=\emptyset$ ($j=1,2,\ldots,s, j\neq i$);\\
$(d)$ $|V(P_i)|\geq 3$ and $uw\in E(G)$ ($w$ is the other end-vertex of $P_i$),\\
then we call $u$ a {\it strange-vertex} on $C'$. Denote by $Str(C')$ the set of strange-vertices on $C'$.
\end{definition}
\setcounter{figure}{0}
%%%%%%%%%%%%%%%%%%%%%%%%%%%%%%%%%%%%%%%%%%%%%%%%%%%%%%%%%%%%%%%%%%%%%%%%%%%%%%%%%%%%%%%%%%%%%%%%%%%%%%%%%%%%%%%%%%%%%%%%%%%%%%%%%%%%%%%%%%%%%%%%

\begin{lemma}
\label{nan}
Let $G$ be a graph on $n\geq3$ vertices and $C'=v_0v_1\cdots v_{p-1}v_0$ an $id$-cycle in $G$. If $v_0v_{p-1}\not\in E(G)$ and $d(v_0)+d(v_{p-1})\geq n$, then there exists an $id$-cycle $C$ such that $V(C')\subseteq V(C)$ and $def(C)< def(C')$.
\end{lemma}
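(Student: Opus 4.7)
The plan is to distinguish two cases depending on whether $v_0$ and $v_{p-1}$ share a common neighbor outside $V(C')$. In either case the resulting cycle $C$ will be obtained from $C'$ by a single local surgery that eliminates the non-edge $(v_{p-1},v_0)$. Since every other consecutive pair of $C'$ will be inherited (possibly reversed) by $C$, verifying that $C$ is again an $id$-cycle will reduce to inspecting only the newly introduced edges: inherited non-edges already satisfied the implicit-degree condition in $C'$, and the new consecutive pairs will be actual edges of $G$.

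If there exists $w\in N(v_0)\cap N(v_{p-1})\cap(V(G)\setminus V(C'))$, we simply insert $w$ between $v_{p-1}$ and $v_0$ to form
\[
C=v_0v_1\cdots v_{p-1}wv_0.
\]
The two new pairs $(v_{p-1},w)$ and $(w,v_0)$ are edges of $G$, every other pair comes from $C'$, and $V(C')\subseteq V(C)$, so $C$ is an $id$-cycle with $def(C)=def(C')-1$, which is enough.

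Otherwise $N(v_0)\cap N(v_{p-1})\cap(V(G)\setminus V(C'))=\emptyset$. Then $N(v_0)\setminus V(C')$ and $N(v_{p-1})\setminus V(C')$ are disjoint subsets of $V(G)\setminus V(C')$, so their sizes sum to at most $n-p$. Combined with $d(v_0)+d(v_{p-1})\geq n$ this gives $|A|+|B|\geq p$, where $A=N(v_0)\cap V(C')$ and $B=N(v_{p-1})\cap V(C')$. Because $v_0v_{p-1}\notin E(G)$, both $A$ and $B$ sit in $\{v_1,\ldots,v_{p-2}\}$. Setting $A^-=\{v_{i-1}:v_i\in A\}\subseteq\{v_0,\ldots,v_{p-3}\}$, the union $A^-\cup B$ lives in the $(p-1)$-set $\{v_0,\ldots,v_{p-2}\}$, so by pigeonhole $A^-\cap B\neq\emptyset$. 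Since $v_0\notin B$ and $v_{p-2}\notin A^-$ (both from $v_0v_{p-1}\notin E(G)$), any common element is some $v_{i-1}$ with $i\in\{2,\ldots,p-2\}$, $v_0v_i\in E(G)$ and $v_{p-1}v_{i-1}\in E(G)$. The Ore-style reversal
\[
C=v_0v_iv_{i+1}\cdots v_{p-1}v_{i-1}v_{i-2}\cdots v_1v_0
\]
uses only the two new edges $(v_0,v_i)$ and $(v_{p-1},v_{i-1})$ together with pairs inherited (possibly reversed) from $C'$, so $C$ is an $id$-cycle on $V(C')$. Compared with $C'$, the pair $(v_{p-1},v_0)$ is removed and $(v_{i-1},v_i)$ is replaced by an actual edge, giving $def(C)\leq def(C')-1$.

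The only genuine obstacle is that the hypothesis $d(v_0)+d(v_{p-1})\geq n$ counts neighbors in all of $G$, while the classical Ore pigeonhole needs enough neighbors \emph{on} the cycle; an outside common neighbor could absorb too much of the degree sum. Isolating this scenario as Case~1 and disposing of it by direct insertion removes precisely this source of loss, after which the remaining on-cycle inequality is exactly what is needed for the standard rotation argument in Case~2.
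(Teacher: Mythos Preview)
Your proof is correct and follows essentially the same approach as the paper: the identical two-case split (common neighbour off $C'$ versus the Ore-type rotation on $C'$ via the pigeonhole $|A^-|+|B|\geq p>|A^-\cup B|$), with the same resulting cycles. You supply a little more detail than the paper does---explicitly checking that inherited consecutive pairs keep their $id$-status and pinning down the index range for $i$---but the argument is the same.
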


\begin{figure}[h]
  \begin{center}
    \includegraphics[width=0.40\textwidth]{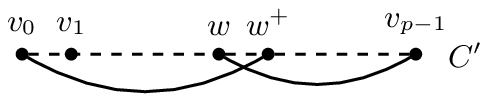}%{0.35}
  \end{center}
  \caption{\label{sum n}}
\end{figure}

\begin{proof}
Let $R$ be the subgraph of $G$ induced by $V(G)\backslash V(C')$. If there exists a vertex $w\in N_R(v_0)\cap N_R(v_{p-1})$, then construct a new $id$-cycle as $C=wv_0v_1\cdots v_{p-1}w$. Obviously, $def(C)=def(C')-1<def(C')$.  If $N_R(v_0)\cap N_R(v_{p-1})=\emptyset$, then $|N_R(v_0)|+|N_R(v_{p-1})|\leq|R|$. Since $d(v_0)+d(v_{p-1})\geq n$, we have $|N_{C'}(v_0)|+|N_{C'}(v_{p-1})|\geq|C'|$. Note that $|N_{C'}(v_0)^-|=|N_{C'}(v_0)|$, so
$$|N_{C'}(v_0)^-|+|N_{C'}(v_{p-1})|\geq|C'|.$$
Since $v_0v_{p-1}\not\in E(G)$, we have
$$|N_{C'}(v_0)^-\cup N_{C'}(v_{p-1})|\leq |C'|-1.$$
This implies that $N_{C'}(v_0)^-\cap N_{C'}(v_{p-1})\neq\emptyset$. Choose a vertex $w\in N_{C'}(v_0)^-\cap N_{C'}(v_{p-1})$, then $w^+\in N_{C'}(v_0)$. Construct an $id$-cycle as $C=v_0w^+C'v_{p-1}w\overline{C'}v_0$ (see Fig. \ref{sum n}). Apparently, $V(C')\subseteq V(C)$ and $def(C)<def(C')$.
\end{proof}
%********************************************************************************************

\begin{lemma}
\label{Li}
Let $G$ be a graph on $n\geq3$ vertices and $C'=v_0v_1\cdots v_{p-1}v_0$ a def-minimal $id$-cycle in $G$ with $v_0v_{p-1}\not\in E(G)$. Let $R$ be the subgraph of $G$ induced by $V(G)\backslash V(C')$. If $v_0$ satisfies the following conditions:\\
$(a)$ $N_{C'}(v_0)^-\subseteq N(v_0)\cup N_2(v_0)\cup\{v_0\}$;\\
$(b)$ $N_2(v_0)\nsubseteq N_{C'}(v_0)^-$;\\
$(c)$ $d(v_0)<n/2$ and $d(v)<d_1(v_0)$ for any $v\in N_R(v_0)$,\\
then there must exist a vertex $u\in N_{C'}(v_0)^-$ such that $d(u)\geq d_1(v_0)$ and $C'$ is not hb-maximal.\\
\end{lemma}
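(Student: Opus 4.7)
The plan is to establish two things in sequence: first, the existence of a vertex $u\in N_{C'}(v_0)^-$ with $d(u)\geq d_1(v_0)$; second, that this $u$ drives a P\'osa-type rotation producing an $id$-cycle $C$ with $V(C')\subseteq V(C)$, $def(C)=def(C')$, and $hb(C)>hb(C')$.

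For the existence of $u$, I would argue by contradiction, assuming every vertex of $N_{C'}(v_0)^-$ has degree less than $d_1(v_0)$. Writing $d(v_0)=k+1$ and $m=|N_2(v_0)|$, let $d_1\leq d_2\leq\cdots\leq d_{k+1+m}$ be the sorted degree sequence of $N(v_0)\cup N_2(v_0)$ from Definition~\ref{Def:implicit}. Since $v_0$ is a break-vertex of the $id$-cycle $C'$ we have $d_1(v_0)\geq n/2$, and (c) gives $d(v_0)<n/2$, so $d_1(v_0)>k+1$; this forces $d_1(v_0)=d_{k+1}$ in Case~1 and $d_1(v_0)=d_k$ in Case~2 of Definition~\ref{Def:implicit}. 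Put $Y=N_{C'}(v_0)^-\cap(N(v_0)\cup N_2(v_0))$; by (a), $|Y|\geq|N_{C'}(v_0)^-|-1=d_{C'}(v_0)-1$, and $Y\cap N_R(v_0)=\emptyset$ since $Y\subseteq V(C')$. The standing assumption, together with (c), forces every vertex of $Y\cup N_R(v_0)$ to have degree $<d_1(v_0)$, yielding at least $(d_{C'}(v_0)-1)+d_R(v_0)=d(v_0)-1=k$ ``low-degree'' vertices inside $N(v_0)\cup N_2(v_0)$. In Case~2 the sorted sequence has at least $m+2$ entries (positions $k,\ldots,k+1+m$) of degree $\geq d_k=d_1(v_0)$, leaving at most $k-1$ low-degree vertices, an immediate contradiction. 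In Case~1 only $\leq k$ low-degree vertices are permitted, and here I would invoke (b) to pick $w\in N_2(v_0)\setminus N_{C'}(v_0)^-$; since in Case~1 we have $d(w)\leq M_2<d_{k+1}=d_1(v_0)$ and $w\notin Y$, $w\notin N_R(v_0)$ (as $N(v_0)\cap N_2(v_0)=\emptyset$), $w$ is a further low-degree vertex, pushing the count to $k+1$ and contradicting the bound.

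For the hb-maximality claim, write $u=v_{i-1}$ for the corresponding $v_i\in N_{C'}(v_0)$, and apply the P\'osa rotation
\[
C=v_0v_1\cdots v_{i-1}v_{p-1}v_{p-2}\cdots v_iv_0.
\]
The only possibly new non-edge on $C$ is $v_{i-1}v_{p-1}$, but $d_1(v_{i-1})\geq d(u)\geq n/2$ and $d_1(v_{p-1})\geq n/2$, so $C$ is an $id$-cycle. Direct bookkeeping gives $def(C)=def(C')-1-[v_{i-1}v_i\notin E]+[v_{i-1}v_{p-1}\notin E]$, and def-minimality of $C'$ therefore forces both $v_{i-1}v_i\in E$ and $v_{i-1}v_{p-1}\notin E$, whence $def(C)=def(C')$. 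Writing $hb$ as $\sum_{\text{non-edges}}(\text{heavy-endpoint count})$, replacing the non-edge $v_0v_{p-1}$ by the non-edge $v_{i-1}v_{p-1}$ changes $hb$ by $([v_{i-1}\text{ heavy}]+[v_{p-1}\text{ heavy}])-([v_0\text{ heavy}]+[v_{p-1}\text{ heavy}])=1-0=1$, since $d(v_{i-1})\geq n/2$ makes $v_{i-1}$ heavy while $d(v_0)<n/2$ makes $v_0$ non-heavy. Thus $hb(C)=hb(C')+1>hb(C')$ and $C'$ is not hb-maximal.

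The main difficulty is in the first part: Case~2 closes cleanly using only (a) and (c), but Case~1 is numerically tight and genuinely needs the extra low-degree witness $w$ supplied by (b). Pinpointing that (b) is precisely what breaks the tie in Case~1, and verifying that the chosen $w$ is indeed low-degree (which uses $d_{k+1}>M_2$ available only in Case~1), is the subtle step.
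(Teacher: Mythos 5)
Your proposal is correct and follows essentially the same route as the paper: the same pigeonhole count of low-degree vertices in $N(v_0)\cup N_2(v_0)$ against the two cases of Definition~\ref{Def:implicit} (with condition $(b)$ supplying the extra low-degree witness exactly when $d_1(v_0)=d_{k+1}>M_2$), followed by the same rotation $C=v_0\cdots v_{i-1}v_{p-1}\cdots v_iv_0$ giving $def(C)=def(C')$ and $hb(C)=hb(C')+1$. If anything, your bookkeeping is tidier than the paper's: you explicitly extract $v_{i-1}v_i\in E(G)$ and $v_{i-1}v_{p-1}\notin E(G)$ from def-minimality and compute $hb$ as a sum over non-edges, where the paper asserts the $Bre^{\pm}$ set identities directly.
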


\begin{proof}
Suppose that $d(v_0)=k+1$. Denote by $d_1\leq d_2\leq \cdots \leq d_{k+1}\leq d_{k+2}\leq \cdots$ the non-decreasing degree sequence of $N(v_0)\cup N_2(v_0)$. Let $M_2=\max\{d(u)|u\in N_2(v_0)\}$. Since $N_{C'}(v_0)^-\subseteq {V(C')}$ and $N_R(v_0)\subseteq V(R)$, we have $N_{C'}(v_0)^-\cap N_R(v_0)=\emptyset$. Furthermore, $|N_{C'}(v_0)^-|=|N_{C'}(v_0)|$, so $|N_{C'}(v_0)^-\backslash\{v_0\}\cup N_R(v_0)|\geq d(v_0)-1$ (the equation holds if and only if $v_0v_1\in E(G)$). Thus we get
\begin{equation}
\label{geshuweiK}
|N_{C'}(v_0)^-\backslash\{v_0\}\cup N_R(v_0)|\geq k.
\end{equation}
By $(a)$, we have
\begin{equation}
\label{baohan}
N_{C'}(v_0)^-\backslash\{v_0\}\cup N_R(v_0)\subseteq N(v_0)\cup N_2(v_0).
\end{equation}
Since $d_1(v_0)\geq n/2 > d(v_0)$, $d_1(v_0)=d_k$ or $d_1(v_0)=d_{k+1}$.

If $d_1(v_0)=d_k$, then there are at most $k-1$ vertices in $N(v_0)\cup N_2(v_0)$ having degrees smaller than $d_1(v_0)$. By (\ref{geshuweiK}) and (\ref{baohan}),  there exists a vertex $u\in N_{C'}(v_0)^-\backslash\{v_0\}\cup N_R(v_0)$ such that $d(u)\geq d_1(v_0)$.

If $d_1(v_0)=d_{k+1}>d_k$, then by Definition \ref{Def:implicit}, $d_{k+1}>M_2$. By ($b$), there is a vertex $w\in N_2(v_0)$ and $w\not\in N_{C'}(v_0)^-\backslash\{v_0\}\cup N_R(v_0)$ satisfying $d(w)\leq M_2 < d_{k+1}$. Similarly, by (\ref{geshuweiK}) and (\ref{baohan}), there exists at least one vertex $u\in N_{C'}(v_0)^-\backslash\{v_0\}\cup N_R(v_0)$ such that $d(u)\geq d_{k+1}=d_1(v_0)$.

Recall that $d(v)<d_1(v_0)$ for any $v\in N_R(v_0)$. In all cases, there is a vertex $u\in N_{C'}(v_0)^-$ satisfying $d(u)\geq d_1(v_0)$.

Let $u=v_s$ and $C=v_sv_{s-1}$ $\cdots v_0v_{s+1}v_{s+2}\cdots v_{p-1}$. Thus $V(C)=V(C')$ and $def(C)\leq def(C')$. Note that $C'$ is def-minimal, we have $def(C)=def(C')$.

Now, we will prove that $hb(C)>hb(C')$. Considering the construction of $C$, we know that
$$Bre^+(C)=\{v_t: v_t\in Bre^-(C'),t\leq s\}\cup\{v_t: v_t\in Bre^+(C'), t>s\},$$
and
$$Bre^-(C)=\{v_t: v_t\in Bre^+(C'),t\leq s\}\cup\{v_t: v_t\in Bre^-(C'), t>s\}\cup\{v_s\}\backslash\{v_0\}.$$
Thus we have
\begin{eqnarray*}
%&~&~\\
hb(C)&=&|Hb(C)\cap Bre^+(C)\}|+|Hb(C)\cap Bre^-(C)\}|\\
&=&|\{v_t:v_t\in Hb(C')\cap Bre^-(C'),t\leq s \}|\\
& &+|\{v_t:v_t\in Hb(C')\cap Bre^+(C'),t>s \}|\\
& &+|\{v_t:v_t\in Hb(C')\cap Bre^+(C'),t\leq s \}|\\
& &+|\{v_t:v_t\in Hb(C')\cap Bre^-(C'),t>s \}|+|\{v_s\}|\\
&=&hb(C')+1.
\end{eqnarray*}
Hence, $C'$ is not hb-maximal. The proof is complete.
\end{proof}

\begin{lemma}
\label{duandian}
Let $G$ be a graph on $n\geq3$ vertices and $C'=x_1P_1y_1x_2P_2y_2\cdots x_sP_sy_sx_1$ a def-minimal and then hb-maximal $id$-cycle in $G$ with $def(C')\geq 1$. Then the following statements hold:\\
$(1)$~$x_ix_j, x_iy_j, y_iy_j\not\in E(G)$ for any $i,j=1,2,\ldots,s, i\neq j$;\\
$(2)$~$Bre(C')=Hb(C')\cup Str(C')$.
\end{lemma}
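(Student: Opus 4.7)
The plan is to argue by contradiction, exploiting both the def-minimality and hb-maximality of $C'$. The central technique is a \emph{path-reordering}: since $C'$ decomposes as a cyclic concatenation of paths $P_1,\ldots,P_s$ glued by $s$ non-edge ``jumps'' $y_kx_{k+1}$, we may rearrange these paths into any cyclic order (optionally reversing some of them) to obtain a new vertex sequence $C''$ on the same vertex set. Every new jump joins two break-vertices of $C'$, both of which have $d_1\geq n/2$, so the $id$-cycle condition at each new jump is automatic; the only relevant quantity when comparing $C''$ with $C'$ is how many jumps are genuine $G$-edges.

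For part $(1)$, assume one of the three forbidden edges is present. If $x_iy_j\in E(G)$ with $i\neq j$, reorder the paths cyclically as $P_1,\ldots,P_{i-1},P_j,P_i,P_{i+1},\ldots,P_{j-1},P_{j+1},\ldots,P_s$; then the jump between $P_j$ and $P_i$ is exactly the edge $x_iy_j$, while the only other altered jumps, $y_{i-1}x_j$ and $y_{j-1}x_{j+1}$, are break-to-break. A direct count replaces three non-edges by at most two non-edges and one edge, giving $def(C'')\leq def(C')-1$, which contradicts def-minimality. The case $x_ix_j\in E(G)$ is handled the same way after first reversing $P_i$ so that its trailing endpoint becomes $x_i$; the case $y_iy_j\in E(G)$ is symmetric with $P_j$ reversed instead.

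For part $(2)$, let $u\in Bre(C')$ with $d(u)<n/2$ and, by symmetry, assume $u=x_i$. Condition (a) of Definition~\ref{strange} is immediate; I verify (b), (c), (d) in turn. For (b), suppose some $v\in N_R(u)$ satisfies $d(v)\geq d_1(u)\geq n/2$ and insert $v$ into $C'$ immediately before $u$, replacing the non-edge $y_{i-1}x_i$ by the pair $y_{i-1}v$ (valid since $d_1(y_{i-1}), d_1(v) \geq n/2$) and the edge $vx_i$, producing an $id$-cycle $C''$ with $V(C')\subseteq V(C'')$. If $y_{i-1}v\in E(G)$ then $def(C'')<def(C')$, contradicting def-minimality; otherwise $v$ becomes a new heavy-break-vertex of $C''$, while $u$ leaves $Bre(C'')$ without affecting $hb$ (as $u\notin Hb$), so $hb(C'')>hb(C')$, contradicting hb-maximality. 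For (c), part (1) already rules out an edge from $u$ to any break-vertex of another $P_j$, so a hypothetical neighbor would be an interior vertex $z$ of $P_j$; splitting $P_j$ at $z$ and rerouting so that $uz$ becomes a new jump yields a def-decreasing $id$-cycle. Condition (d) is handled analogously once (b) and (c) are in place.

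The main obstacle will lie in parts (c) and (d), where splitting a path at an interior vertex introduces artificial endpoints whose implicit degrees are not a priori $\geq n/2$. Unlike part (1), the $id$-cycle condition at the new jumps is no longer automatic, and one must invoke Lemma~\ref{Li} to generate the required contradiction with hb-maximality in the cases where def-minimality alone does not immediately force an improvement on $C'$.
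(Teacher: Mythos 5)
Your part (1) and your treatment of condition (b) of Definition~\ref{strange} (inserting an external heavy neighbour $v$ of $x_i$, which either lowers $def$ or trades the non-heavy $x_i$ for the heavy $v$ in $Bre^-$ and raises $hb$) follow the paper's argument and are correct. The genuine gap is in your handling of conditions (c) and (d). Your stated mechanism --- if $x_i$ has a neighbour $z$ in the interior of another path $P_j$, split $P_j$ at $z$ and reroute so that $x_iz$ becomes a jump, ``yielding a def-decreasing $id$-cycle'' --- does not work: any such rerouting destroys an internal edge of $P_j$ and creates a new jump incident with an interior vertex of $P_j$ (either $z$ or one of its neighbours on $P_j$) whose degree and implicit degree are uncontrolled, so the resulting sequence need not be an $id$-cycle at all, let alone one of smaller deficit. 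You flag this obstacle yourself and say that Lemma~\ref{Li} ``must be invoked'', but you never supply the step that makes Lemma~\ref{Li} applicable, and that step is the entire substance of these cases.

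Concretely, what is missing is the verification of hypothesis $(b)$ of Lemma~\ref{Li}, namely $N_2(x_i)\nsubseteq N_{C'}(x_i)^-$. The paper obtains it by considering the neighbour $v_{l}$ of $x_i$ that is farthest along $C'$ when $C'$ is oriented to start at $x_i$: if $N(x_i)$ meets some $P_j$ with $j\neq i$, then by part (1) this farthest neighbour is an interior vertex of a path, so $v_{l}v_{l+1}\in E(G)$ and $v_{l+1}\in N_2(x_i)\setminus N_{C'}(x_i)^-$. The same device (or, when $|V(P_i)|\leq 2$, the observation that $N_{C'}(x_i)^-\subseteq\{x_i\}$ while $N_2(x_i)\neq\emptyset$ because $d(x_i)<n/2$) disposes of condition (d). Hypothesis $(a)$ of Lemma~\ref{Li} comes from part (1), hypothesis $(c)$ is your already-established condition (b), and the conclusion of Lemma~\ref{Li} contradicts hb-maximality, not def-minimality. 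Until you exhibit such a witness for $N_2(x_i)\nsubseteq N_{C'}(x_i)^-$, conditions (c) and (d) of the strange-vertex definition remain unproved and part (2) is incomplete.
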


\begin{proof}
$(1)$ By contradiction. Assume that $x_ix_j\in E(G)$. Combine $P_i$ and $P_j$ into a new path $P'=y_iP_ix_ix_jP_jy_j$. Note that although we change the orders or orientations of $P_i$ and $P_j$ in $C'$, it always produces an $id$-cycle. We can assume that $C$ is an arbitrary permutation of $\{P_1,P_2,\ldots,P_s\}\backslash\{P_i,P_j\}\cup \{P'\}$. Thus $def(C)\leq def(C')-1<def(C')$. This contradicts that $C'$ is a def-minimal. Similarly, we can prove that $x_iy_j, y_iy_j\not\in E(G)$.

$(2)$ By contradiction. Assume that there is a break-vertex $x_i$ which is neither a strange-vertex nor a heavy-break-vertex. Then $d(x_i)<n/2$. Let $R$ be the subgraph of $G$ induced by $V(G)\backslash V(C')$. By Definition \ref{strange}, at least one of the following statements fails:\\
$(a)$ $d(v)<d_1(x_i)$ for every vertex $v\in N_R(x_i)$;\\
$(b)$ $N(x_i)\cap V(P_j)=\emptyset$ for any $j=1,2,\ldots,s$, $j\neq i$;\\
$(c)$ $|V(P_i)|\geq 3$ and $x_iy_i\in E(G)$.\\
%因为任意改变~$P_i$~的方向得到的仍然是顶点集为~$V(C')$~的~DCI, 所以不妨设~
Without loss of generality, let $C'=x_iP_iy_ix_{i+1}P_{i+1}y_{i+1}\cdots x_{i-1}P_{i-1}y_{i-1}x_i$. Denote by $v_0v_1\cdots v_{p-1}v_0$ the vertex sequence of $C'$ with $v_0=x_i$.  Let $l(x_i)=\max\{t|v_tv_0\in E(G)\}$. By $(1)$, we have $N_{C'}(x_i)^-\subseteq N(x_i)\cup N_2(x_i)\cup\{x_i\}$.

Now, we will discuss the following three cases.

\begin{case}
$(a)$ fails.
\end{case}

In this case, there is a vertex $v\in N_R(x_i)$ such that $d(v)\geq d_1(x_i)\geq n/2$. Let $C=vv_0v_1\cdots v_{p-1}v$. Then $V(C')\subseteq V(C)$ and $def(C)\leq def(C')$. Since $C'$ is a def-minimal $id$-cycle, we have $def(C)=def(C')$ and $vv_{p-1}\not\in E(G)$. Thus $Bre^+(C)=Bre^+(C')$, $Bre^-(C)=Bre^-(C')\cup\{v\}\backslash\{x_i\}$ and $hb(C)=hb(C')+1$. This contradicts that $C'$ is hb-maximal.

\begin{case}
$(a)$ holds and $(b)$ fails.
\end{case}

In this case, there exists a path $P_j$ ($j=1,2,\ldots,s$, $j\neq i$) such that $N(x_i)\cap V(P_j)\neq\emptyset$. By $(1)$, $x_j,y_j\not\in N(x_i)$. Thus $v_{l(x_i)}v_{l(x_i)+1}\in E(G)$. This implies that $v_{l(x_i)+1}\in N_2(x_i)$. Since $ v_{l(x_i)+1}\not\in N_{C'}(x_i)^-$, we have $N_2(x_i)\nsubseteq N_{C'}(x_i)^-$. Thus, the vertex $x_i$ on the $id$-cycle $C'$ suffices the conditions in Lemma \ref{Li}. Hence, $C'$ is not hb-maximal, a contradiction.

\begin{case}
$(a)$, $(b)$ hold and $(c)$ fails.
\end{case}

In this case, $|V(P_i)|\leq 2$ or $|V(P_i)|\geq 3$ and $x_iy_i\not\in E(G)$. If $|V(P_i)|\leq 2$, then $N_{C'}(x_i)^-\subseteq \{x_i\}$. So $N_2(x_i)\cap N_{C'}(x_i)^-=\emptyset$. Since $N_2(x_i)\neq\emptyset$, we have $N_2(x_i)\nsubseteq N_{C'}(x_i)^-$. If $|V(P_i)|\geq 3$ and $x_iy_i\not\in E(G)$, then $N_{C'}(x_i)\neq \emptyset$ and $v_{l(x_i)+1}\in N_2(x_i)$. Furthermore, we know that $v_{l(x_i)+1}\not\in N_{C'}(x_i)^-$, so $N_2(x_i)\nsubseteq N_{C'}(x_i)^-$. No matter $|V(P_i)|\leq 2$ or $|V(P_i)|\geq 3$, the vertex $x_i$ on the $id$-cycle $C'$ suffices the conditions in Lemma \ref{Li}. Thus, $C'$ is not hb-maximal, a contradiction.

Now, each break-vertex $x_i$ on $C'$ is either is strange-vertex or a heavy-break-vertex. Similarly, we can prove this conclusion for every break-vertex $y_i$ by analyzing the reversion of $C'$.

The proof is complete.
\end{proof}

\begin{lemma}
\label{contain}
Let $G$ be a graph on $n\geq3$ vertices and $C'=x_1P_1y_1x_2P_2y_2\cdots x_sP_sy_sx_1$ a def-minimal and then hb-maximal $id$-cycle in $G$ with $def(C')\geq 1$. If $x_i\in Str(C')$, then $N_2(x_i)\subseteq N_{C'}(x_i)^-\subseteq V(P_i)$.
\end{lemma}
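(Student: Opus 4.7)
The statement asserts two inclusions about a strange-vertex $x_i$, and I would prove them separately, with the second being essentially a corollary of Lemma \ref{Li}.

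For the easier inclusion $N_{C'}(x_i)^-\subseteq V(P_i)$, I would argue directly from the strange-vertex conditions. By condition (c) of Definition \ref{strange}, every neighbor of $x_i$ on $C'$ lies in $V(P_i)$; and since $x_i\notin N(x_i)$, any such neighbor $v^+$ is a vertex of $V(P_i)\setminus\{x_i\}$. Because $P_i$ is a contiguous segment of $C'$ beginning at $x_i$, the predecessor of $v^+$ on $C'$ is then the predecessor of $v^+$ on $P_i$, hence also in $V(P_i)$. (If $v^+=y_i$, condition (d) guarantees $|V(P_i)|\ge 3$, so the predecessor is still an interior vertex of $P_i$; otherwise $v^+$ is internal and the same holds.) Note that $x_i$ itself can lie in $N_{C'}(x_i)^-$, but $x_i\in V(P_i)$.

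For the second inclusion $N_2(x_i)\subseteq N_{C'}(x_i)^-$, I would argue by contradiction. Suppose $N_2(x_i)\nsubseteq N_{C'}(x_i)^-$. Re-orient and rename $C'$ so that $x_i=v_0$ and its $C'$-predecessor $y_{i-1}$ (indices mod $s$) becomes $v_{p-1}$. By Lemma \ref{duandian}(1), $x_iy_{i-1}\notin E(G)$, so $v_0v_{p-1}\notin E(G)$ as required by Lemma \ref{Li}. I verify the three hypotheses of that lemma: condition (c) is immediate from parts (a) and (b) of Definition \ref{strange}; condition (b) is our standing assumption; and condition (a) follows from the analysis above, because for any $v\in N_{C'}(x_i)^-$ the successor $v^+\in N(x_i)$ satisfies $vv^+\in E(P_i)\subseteq E(G)$, forcing $v\in N(x_i)\cup N_2(x_i)\cup\{x_i\}$. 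Lemma \ref{Li} then yields that $C'$ is not hb-maximal, contradicting the hypothesis.

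The only delicate point, which I view as the main obstacle, is establishing condition (a) of Lemma \ref{Li}: one must be sure that for every $v\in N_{C'}(x_i)^-$ the edge $vv^+$ is actually present in $G$, i.e., that $v$ and $v^+$ lie in a common $P_j$ rather than straddling the junction between two consecutive paths. This is exactly where strange-vertex condition (c) is indispensable: it pushes $v^+$ into $V(P_i)\setminus\{x_i\}$, making $v$ its immediate $P_i$-predecessor and $vv^+$ an edge of $P_i$. With this in hand, both inclusions follow cleanly.
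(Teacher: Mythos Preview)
Your proof is correct and follows essentially the same route as the paper: both inclusions are handled separately, with $N_{C'}(x_i)^-\subseteq V(P_i)$ read off from the strange-vertex definition and $N_2(x_i)\subseteq N_{C'}(x_i)^-$ obtained by contradiction via Lemma~\ref{Li}. You supply more detail than the paper in verifying hypothesis~(a) of Lemma~\ref{Li} (the paper simply asserts it), and you reverse the order of the two inclusions, but these are purely presentational differences.
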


\begin{proof}
Without loss of generality, assume $C'=v_0v_1\cdots v_{p-1}v_0$ starts at $v_0=x_i$ with $v_0v_{p-1}\not\in E(G)$. First, we will prove that $N_2(x_i)\subseteq N_{C'}(x_i)^-$.

By contradiction. Assume that $N_2(x_i)\nsubseteq N_{C'}(x_i)^-$. Recall the definition of strange-vertex. We know that the vertex $x_i$ on the $id$-cycle $C'$ suffices the conditions of Lemma \ref{Li}. Thus, $C'$ is not hb-maximal, a contradiction.

Furthermore, by the definition of strange-vertex, we have $N_{C'}(x_i)^-\subseteq V(P_i)$.  So $N_2(x_i)\subseteq N_{C'}(x_i)^-\subseteq V(P_i)$.
\end{proof}

\setcounter{case}{0}
\setcounter{claim}{0}

\section{Proof of Theorem \ref{Thm:icd}}

By contradiction. Assume that $|V(G)|=n$. Let $C_1$ be a def-minimal and then hb-maximal counterexample with $def(C_1)\geq 1$. By Lemma \ref{duandian}, we have $Bre(C_1)=Str(C_1)\cup Hb(C_1)$.

\begin{claim}
\label{1}
$def(C_1)\geq 2$.
\end{claim}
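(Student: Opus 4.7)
The plan is to suppose for contradiction that $def(C_1)=1$ and derive that $C_1$ can in fact be extended to a cycle, contradicting that it is a counterexample. So write $C_1=v_0v_1\cdots v_{p-1}v_0$ with the unique missing chord being $v_0v_{p-1}\notin E(G)$. In the decomposition $C_1=x_1P_1y_1x_2P_2y_2\cdots x_sP_sy_sx_1$, a single break creates a single path, so $s=1$ and $Bre(C_1)=\{v_0,v_{p-1}\}=\{x_1,y_1\}$.

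Next I would rule out that either break-vertex is a strange-vertex. By Definition \ref{strange}(d), if $x_1\in Str(C_1)$ then $x_1y_1\in E(G)$, contradicting $v_0v_{p-1}\notin E(G)$; the same argument applies to $y_1$. Therefore $Str(C_1)=\emptyset$, and by Lemma \ref{duandian}(2) we must have $\{x_1,y_1\}\subseteq Hb(C_1)$, i.e., $d(x_1)\ge n/2$ and $d(y_1)\ge n/2$, whence $d(v_0)+d(v_{p-1})\ge n$.

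Now invoke Lemma \ref{nan}: since $v_0v_{p-1}\notin E(G)$ and $d(v_0)+d(v_{p-1})\ge n$, there exists an $id$-cycle $C$ with $V(C_1)\subseteq V(C)$ and $def(C)<def(C_1)=1$, so $def(C)=0$. Thus $C$ is an actual cycle of $G$ containing $V(C_1)$, contradicting the assumption that $C_1$ is a counterexample. This forces $def(C_1)\ge 2$.

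The argument is mostly bookkeeping once the right lemmas are identified; the one step worth stating carefully is the observation that condition (d) of Definition \ref{strange} is incompatible with $s=1$ in a def-minimal $id$-cycle, because it demands the very chord whose absence defines the single break. Everything else is a direct application of Lemmas \ref{duandian} and \ref{nan}, so I do not anticipate a real obstacle.
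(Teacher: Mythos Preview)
Your proof is correct and follows essentially the same line as the paper's: both argue that condition~(d) of Definition~\ref{strange} forces $x_1y_1\in E(G)$ when $s=1$, so neither break-vertex can be strange, whence Lemma~\ref{duandian}(2) makes both heavy, and Lemma~\ref{nan} then yields a smaller-deficit $id$-cycle. The only cosmetic difference is that the paper phrases the contradiction as violating def-minimality of $C_1$, whereas you phrase it as $C_1$ being cyclable; since $def(C)<1$ means $def(C)=0$, these amount to the same thing.
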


\begin{proof}
Assume that $def(C_1)=1$. Then $C_1$ is a path in $G$. Let $C_1=v_0v_1\cdots v_{p-1}$ and $v_0v_{p-1}\not\in E(G)$. By the definition of strange-vertex, we have $v_0, v_{p-1}\not\in Str(C_1)$.  This implies that $d(v_0)\geq n/2$ and $d(v_{p-1})\geq n/2$. By Lemma~\ref{nan}, there must exist an $id$-cycle $C_2$ in $G$ such that $V(C_1)\subseteq V(C_2)$ and $def(C_2)< def(C_1)$, a contradiction.
\end{proof}

Now, let $C_1=x_1P_1y_1x_2P_2y_2\cdots x_sP_sy_sx_1$. By Claim \ref{1}, we have $s\geq 2$. Let $i,j$ be arbitrary integers satisfying $1\leq i<j\leq s$.

\begin{claim}
\label{ordernary}
$x_i\in Str(C_1)$ or $x_j\in Str(C_1)$.
\end{claim}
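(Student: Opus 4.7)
The plan is to argue by contradiction: suppose that neither $x_i$ nor $x_j$ lies in $Str(C_1)$. Since by Lemma~\ref{duandian}$(2)$ we have $Bre(C_1)=Str(C_1)\cup Hb(C_1)$, both $x_i$ and $x_j$ must then be heavy-break-vertices, so $d(x_i)+d(x_j)\geq n$; and Lemma~\ref{duandian}$(1)$ gives $x_ix_j\notin E(G)$. The strategy is to reshuffle $C_1$ into a new $id$-cycle $C_1'$ of the same deficit-degree on which $x_i$ and $x_j$ sit next to each other across a break, after which Lemma~\ref{nan} will reduce the deficit-degree and contradict the def-minimality of $C_1$.

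The concrete reshuffle reverses the cyclic arc of $C_1$ running from $x_i$ to $y_{j-1}$. Writing
\[
C_1=\cdots y_{i-1}\,x_iP_iy_i\,x_{i+1}P_{i+1}y_{i+1}\,\cdots\,x_{j-1}P_{j-1}y_{j-1}\,x_jP_jy_j\cdots ,
\]
the reversal produces the cyclic sequence
\[
C_1'=\cdots y_{i-1}\,y_{j-1}P_{j-1}x_{j-1}\,y_{j-2}P_{j-2}x_{j-2}\,\cdots\,y_iP_ix_i\,x_jP_jy_j\cdots ,
\]
where each symbol $y_kP_kx_k$ denotes the reverse traversal of the path $P_k$. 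The non-edges of $C_1'$ fall into three groups: the unchanged transitions $y_kx_{k+1}$ outside the reversed arc, the interior transitions $x_{k+1}y_k$ for $k=i,\ldots,j-2$ inside the reversed arc (still non-edges because non-adjacency is symmetric), and the two newly created transitions $y_{i-1}y_{j-1}$ and $x_ix_j$ at the two ends of the reversed arc. By Lemma~\ref{duandian}$(1)$ the two new transitions are non-edges of $G$, and all endpoints involved in them are break-vertices of $C_1$, hence of first implicit-degree at least $n/2$. Consequently, $C_1'$ is a genuine $id$-cycle with $V(C_1')=V(C_1)$ and $def(C_1')=def(C_1)$.

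To finish, relabel $C_1'$ as $v_0v_1\cdots v_{p-1}v_0$ with $v_0=x_j$ and $v_{p-1}=x_i$; then $v_0v_{p-1}\notin E(G)$ and $d(v_0)+d(v_{p-1})\geq n$, so Lemma~\ref{nan} applies to $C_1'$ and supplies an $id$-cycle $C_2$ with $V(C_1)=V(C_1')\subseteq V(C_2)$ and $def(C_2)<def(C_1')=def(C_1)$, contradicting the def-minimality of $C_1$. The principal technical point is the bookkeeping in the middle paragraph: one must verify that reversing a cyclic arc between two break-vertices both preserves the $id$-cycle property and keeps the exact count of non-edges invariant, so that the decrease in $def$ furnished by Lemma~\ref{nan} genuinely contradicts the minimality of $C_1$.
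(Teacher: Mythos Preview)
Your proof is correct and follows essentially the same approach as the paper: assume both $x_i,x_j\in Hb(C_1)$, rearrange the paths of $C_1$ into an $id$-cycle on the same vertex set and with the same deficit in which $x_i$ and $x_j$ are consecutive, and then invoke Lemma~\ref{nan} to violate def-minimality. The paper merely asserts that such a rearrangement exists (``by changing the orders and orientations of the paths in $C_1$ appropriately''), whereas you give a concrete realization via reversing the arc from $x_i$ to $y_{j-1}$ and carefully verify that the result is still an $id$-cycle with unchanged deficit; this extra bookkeeping is sound and fills in what the paper leaves implicit.
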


\begin{proof}
By contradiction. Assume that $x_i\in Hb(C_1)$ and $x_j\in Hb(C_1)$. Then by changing the orders and orientations of the paths in $C_1$  appropriately we can  construct a new $id$-cycle $C_2$ such that $x_i$ and $x_j$ are successive on $C_2$. Since $d(x_i)+d(x_j)\geq n$, by Lemma \ref{nan}, there exists an $id$-cycle $C_3$ satisfying $V(C_1)=V(C_2)\subseteq V(C_3)$ and $def(C_3)< def(C_2)=def(C_1)$, a contradiction.
\end{proof}

\begin{claim}
\label{special}
$x_i\in Hb(C_1)$ or $x_j\in Hb(C_1)$.
\end{claim}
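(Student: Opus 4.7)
The plan is to argue by contradiction: assume both $x_i,x_j\in Str(C_1)$, and produce an $id$-cycle containing $V(C_1)$ of strictly smaller deficit-degree, contradicting the def-minimality of $C_1$. The first step is to extract a heavy vertex inside each of $V(P_i)$ and $V(P_j)$. The strange-vertex conditions $(b)$ and $(c)$, combined with Lemma~\ref{contain}, confine $N(x_i)\cup N_2(x_i)$ to $V(P_i)\cup N_R(x_i)$, and only $V(P_i)$ can house a vertex of degree at least $d_1(x_i)$. Since $d(x_i)<n/2\leq d_1(x_i)$, the degree-sequence formula defining $d_1(x_i)$ then forces at least one---and by counting against $|N(x_i)|=d(x_i)$ and $N_2(x_i)\neq\emptyset$, at least two---heavy witnesses $u\in V(P_i)\setminus\{x_i\}$ with $d(u)\geq d_1(x_i)\geq n/2$. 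The symmetric argument gives a heavy witness $w\in V(P_j)\setminus\{x_j\}$.

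The second step is to exploit the chord $x_iy_i\in E(G)$ granted by strange-vertex condition $(d)$ to rotate $P_i$. Writing $P_i=x_iv_1\cdots v_{l-1}y_i$ and $u=v_k$, the path
\[
P_i^{*}=v_kv_{k+1}\cdots v_{l-1}y_ix_iv_1\cdots v_{k-1}
\]
(and its mirror image) is a Hamilton path of $G[V(P_i)]$ with $u$ at one endpoint; the other endpoint $u'$ is one of the two $H_i$-neighbours of $u$ on the Hamilton cycle $H_i:=P_i+x_iy_i$ of $G[V(P_i)]$. An analogous rotation produces $P_j^{*}$ with $w$ at one end. Then reassemble the $s$ paths into a cyclic sequence $C_2$ in which $P_i^{*}$ is immediately followed by $P_j^{*}$, with $u$ facing $w$ across the gap between them.

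If $C_2$ is a genuine $id$-cycle, then the def-minimality of $C_1$ already forces $uw\notin E(G)$ (otherwise the $uw$-junction is an edge and $def(C_2)<def(C_1)$), and Lemma~\ref{nan} applied to $C_2$ with $u$ and $w$ as its first and last vertices yields an $id$-cycle $C_3\supseteq V(C_1)$ with $def(C_3)<def(C_1)$, the desired contradiction. The main obstacle is precisely the verification that $C_2$ is indeed an $id$-cycle: the $uw$-gap and every gap among the untouched paths $P_k$ ($k\neq i,j$) are automatically valid, but the two new gaps involving the other endpoints $u'$ and $w'$ require both implicit-degrees at least $n/2$ (or the relevant edge). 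Here I would use the freedom to pick $u$ from the multiple heavy witnesses in $V(P_i)$ and to choose between the two $H_i$-rotations, aiming to arrange $u'\in\{x_i,y_i\}$ so that $u'$ is a break-vertex of $C_1$ with $d_1(u')\geq n/2$; the same choice is made for $w'$. If no direct rotation achieves this, a further local appeal to Lemma~\ref{Li} at $u'$ (or a cleverer reshuffling of the remaining paths $P_k$ around $P_i^{*}$ and $P_j^{*}$) can be used to restore the $id$-cycle property before finally invoking Lemma~\ref{nan}.
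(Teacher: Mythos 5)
Your first step---extracting heavy witnesses $u\in V(P_i)$ and $w\in V(P_j)$ with $d(u)\geq d_1(x_i)\geq n/2$ and $d(w)\geq d_1(x_j)\geq n/2$---agrees with the paper, but the core of your argument (rotating $P_i$ and $P_j$ along the chords $x_iy_i$, $x_jy_j$, reassembling into $C_2$, and then invoking Lemma~\ref{nan}) has a genuine gap, which you flag yourself but do not close. After the rotation, the second endpoint $u'$ of $P_i^{*}$ is $v_{k-1}$ (or $v_{k+1}$), an interior vertex of $P_i$ about which nothing is known: there is no lower bound on $d_1(u')$ and no edge guaranteed from $u'$ to the endpoint of the neighbouring path, so the new junctions at $u'$ and $w'$ may violate the $id$-cycle condition, and then Lemma~\ref{nan} cannot be applied to $C_2$ at all. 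The proposed remedies do not repair this: the heavy witness need not be path-adjacent to $x_i$ or $y_i$, so no choice among witnesses or between the two rotation directions forces $u'\in\{x_i,y_i\}$; and Lemma~\ref{Li} cannot be ``appealed to locally'' at $u'$, since its hypotheses require the ambient vertex sequence to already be a def-minimal $id$-cycle---precisely what is in question for $C_2$. A ``cleverer reshuffling'' is not exhibited. So, as written, no contradiction with the def-minimality of $C_1$ is actually derived.

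The paper's proof avoids any cycle surgery. From $d(u_i)+d(u_j)\geq n$ it concludes that either $u_iu_j\in E(G)$ or $N(u_i)\cap N(u_j)\neq\emptyset$, and in every resulting configuration (common neighbour on $P_i$, on $P_j$, on some other $P_k$, or outside $C_1$) it produces a vertex at distance two from $x_i$ (or from $x_j$) lying outside $V(P_i)$ (resp.\ $V(P_j)$), contradicting $N_2(x_i)\subseteq V(P_i)$ from Lemma~\ref{contain}. Your opening step can be salvaged by switching to this distance-two argument; the rotation-and-reassembly route, in its present form, is not a proof.
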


\begin{proof}
By contradiction. Assume that $x_i\in Str(C_1)$ and $x_j\in Str(C_1)$. By the definitions of strange-vertex and implicit-degree, there must exist vertices $u_i \in V(P_i)\cap N(x_i)$ and $u_j \in V(P_j)\cap N(x_j)$ satisfying $d(u_i)\geq d_1(x_i)\geq n/2$ and $d(u_j)\geq d_1(x_j)\geq n/2$, respectively. Thus we have $d(u_i)+d(u_j)\geq n$. So either $u_iu_j\in E(G)$ or $N(u_i)\cap N(u_j)\neq\emptyset$.

\begin{case}
$u_iu_j\in E(G)$
\end{case}

In this case, $x_iu_iu_j$ is a shortest path from $x_i$ to $u_j$ in $G$. So $u_j\in N_2(x_i)$ and $N_2(x_i)\nsubseteq V(P_i)$. This contradicts to Lemma \ref{contain}.

\begin{case}
$u_iu_j\not\in E(G)$
\end{case}

In this case, there is a vertex $w\in N(u_i)\cap N(u_j)$.

If $w\in V(P_i)$~(or~$V(P_j)$), then it follows from the definition of strange-vertex that $w\in N_2(x_j)$ (or $N_2(x_i)$). So $N_2(x_j)\nsubseteq V(P_j)$ (or $N_2(x_i)\nsubseteq V(P_i)$). This contradicts to Lemma \ref{contain}.

If $w\in V(P_k)$ and $k\neq i,j$, then it follows from the definition of strange-vertex that $w\in N_2(x_i)$ and $N_2(x_i)\nsubseteq V(P_i)$. This contradicts to Lemma \ref{contain}.

If $w\in V(G)\backslash V(C_1)$, then consider the relation between $w$ and $x_i$. If $wx_i\in E(G)$, then $x_iwu_j$ is a shortest path from $x_i$ to $u_j$. Thus $u_j\in N_2(x_i)$. If $wx_i\not\in E(G)$, then $w\in N_2(x_i)$. So, in all cases, we have $N_2(x_i)\nsubseteq V(P_i)$. This contradicts to Lemma \ref{contain}.
\end{proof}

\begin{claim}
\label{2}
$def(C_1)=2$.
\end{claim}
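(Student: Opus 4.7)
The plan is to combine Claims~\ref{ordernary} and~\ref{special} with the decomposition supplied by Lemma~\ref{duandian}(2) in order to force $s \leq 2$, where $s$ is the number of paths in the decomposition $C_1 = x_1P_1y_1 \cdots x_sP_sy_sx_1$. Note that $def(C_1) = s$ by construction, so it suffices to rule out $s \geq 3$.

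First I would observe the key dichotomy: $Str(C_1) \cap Hb(C_1) = \emptyset$. This is immediate from the definitions, since Definition~\ref{strange}(a) requires every strange-vertex to have $d(u) < n/2$, while a heavy-break-vertex satisfies $d(v) \geq n/2$. By Lemma~\ref{duandian}(2) we have $Bre(C_1) = Str(C_1) \cup Hb(C_1)$, so each break-vertex, and in particular each $x_i$, lies in \emph{exactly one} of $Str(C_1)$ or $Hb(C_1)$.

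Next I would apply Claims~\ref{ordernary} and~\ref{special} simultaneously. For every pair $1 \leq i < j \leq s$, Claim~\ref{ordernary} gives $x_i \in Str(C_1)$ or $x_j \in Str(C_1)$, and Claim~\ref{special} gives $x_i \in Hb(C_1)$ or $x_j \in Hb(C_1)$. Combined with the disjointness above, this forces $x_i$ and $x_j$ to lie in \emph{different} sets. Thus the assignment sending each $x_i$ to its unique set $\{Str,Hb\}$ is a proper $2$-coloring of the complete graph on $\{x_1,\ldots,x_s\}$.

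Since $K_s$ admits a proper $2$-coloring if and only if $s \leq 2$, and Claim~\ref{1} already gives $s = def(C_1) \geq 2$, we conclude $s = 2$, i.e.\ $def(C_1) = 2$. The argument is almost purely bookkeeping at this stage; the only thing to check carefully is the disjointness of $Str(C_1)$ and $Hb(C_1)$, which is what allows the ``at-least-one'' statements of Claims~\ref{ordernary} and~\ref{special} to upgrade to ``exactly-one'' statements and collapse via the $K_3$-is-not-bipartite observation. The main substantive content has already been done in Lemmas~\ref{nan}--\ref{contain} and in the preceding two claims.
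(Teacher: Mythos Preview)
Your proof is correct and follows essentially the same approach as the paper: the paper argues that if $s\geq 3$ then among any three of the $x_i$ at least two lie in $Str(C_1)$ or at least two lie in $Hb(C_1)$, contradicting Claim~\ref{special} or Claim~\ref{ordernary} respectively, which is exactly the pigeonhole underlying your ``$K_s$ is not properly $2$-colorable for $s\geq 3$'' phrasing. Your explicit observation that $Str(C_1)\cap Hb(C_1)=\emptyset$ is used implicitly in the paper as well, and making it explicit is a nice touch.
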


\begin{proof}
By contradiction. Assume that $def(C_1)\neq 2$. By Claim \ref{1}, $def(C_1)\geq 3$. For any integers $i,j,k$ satisfying $1\leq i<j<k\leq s$, we have $|\{x_i,x_j,x_k\}\cap Str(C_1)|\geq 2$ or $|\{x_i,x_j,x_k\}\cap Hb(C_1)|\geq 2$. This contradicts to Claim \ref{ordernary} or Claim \ref{special}.
\end{proof}

Now, we can assume that $C_1=x_1P_1y_1x_2P_2y_2x_1$. Without loss of generality, let $x_1\in Str(C_1)$ and $x_2\in Hb(C_1)$. By the definitions of strange-vertex and implicit-degree, there must exist an vertex $u\in V(P_1)\cap N(x_1)$ such that $d(u)\geq d_1(x_1)\geq  n/2$. Since $d(x_2)\geq n/2$, we have $ux_2\in E(G)$ or $N(u)\cap N(x_2)\neq\emptyset$.

If $ux_2\in E(G)$, then $x_2\in N_2(x_1)$ and $N_2(x_1)\nsubseteq V(P_1)$. This contradicts to Lemma \ref{contain}. So there exists a vertex $w\in N(u)\cap N(x_2)$.

If $w\in V(P_2)$, then $w\in N_2(x_1)$ and $N_2(x_1)\nsubseteq V(P_1)$, a contradiction. If $w\in V(G)\backslash V(C_1)$, then consider the relation between $w$ and $x_1$. If $wx_1\not\in E(G)$, then $w\in N_2(x_1)$ and $N_2(x_1)\nsubseteq V(P_1)$, a contradiction. If $wx_1\in E(G)$, then $x_2\in N_2(x_1)$ and $N_2(x_1)\nsubseteq V(P_1)$, a contradiction. So the only possible situation is that $w\in V(P_1)$ and $wx_1\not\in E(G)$. Thus $w\in N_2(x_1)$. Furthermore, by Lemma \ref{contain}, we have $N_2(x_1)\subseteq N_{C_1}(x_1)^-$ and $w\in N_{C_1}(x_1)^-$. So $w^+\in N(x_1)$~(see Fig. \ref{2 path}).
\begin{figure}[ht]
  \begin{center}
    \includegraphics[width=0.48\textwidth]{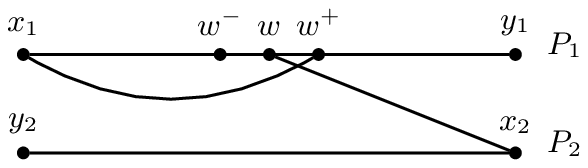}%0.48
  \end{center}
  \caption{\label{2 path}}
\end{figure}

Let $C_2=y_2P_2x_2ww^-P_1x_1w^+P_1y_1y_2$. Apparently, $C_2$ is an $id$-cycle, $Bre(C_2)=\{y_1,y_2\}$, $V(C_1)=V(C_2)$ and $def(C_2)=1<def(C_1)$. This contradicts that $C_1$ is a def-minimal $id$-cycle.

The proof is complete. \qed

\end{document}